\documentclass[11pt]{article}

\usepackage[a4paper,margin=1in]{geometry}
\usepackage{amsmath,amssymb,amsthm,mathtools}
\usepackage{booktabs,tabularx}
\usepackage{enumitem}
\usepackage{xcolor}
\usepackage{graphicx}
\usepackage{microtype}
\usepackage[colorlinks=true,linkcolor=blue!55!black,citecolor=blue!55!black,urlcolor=blue!55!black]{hyperref}
\usepackage[nameinlink,noabbrev]{cleveref}
\usepackage{authblk}

\allowdisplaybreaks[3]
\setlist[itemize]{leftmargin=1.7em,itemsep=0.2em,topsep=0.3em}

\newtheorem{theorem}{Theorem}[section]
\newtheorem{lemma}[theorem]{Lemma}
\newtheorem{proposition}[theorem]{Proposition}
\newtheorem{corollary}[theorem]{Corollary}

\theoremstyle{definition}
\newtheorem{remark}[theorem]{Remark}

\newcommand{\R}{\mathbb{R}}
\newcommand{\E}{\mathbb{E}}
\newcommand{\Prob}{\mathbb{P}}
\newcommand{\ip}[2]{\big\langle #1,#2\big\rangle}

\newcommand{\rank}{\operatorname{rank}}
\newcommand{\diag}{\operatorname{diag}}
\newcommand{\cA}{\mathcal{A}}
\newcommand{\err}{\mathcal{E}}
\newcommand{\eps}{\varepsilon}
\newcommand{\dist}{\mathfrak D}

\title{Convergence Rates of Factorized Gradient Descent for\\
Overparameterized Asymmetric Matrix Sensing with Balancing Regularization%
\thanks{Liping Zhang is supported by the National Natural Science Foundation of China under Grant No.~12571323.}}

\author[1]{Yongjun Chen}
\author[1]{Jiaming Zhang}
\author[1]{Liping Zhang\thanks{Corresponding author.
E-mail: lipingzhang@mail.tsinghua.edu.cn}}

\affil[1]{%
Department of Mathematical Sciences, Tsinghua University,\\
Beijing 100084, China\\
E-mail: yongjun-24@mails.tsinghua.edu.cn;\
jiaming-24@mails.tsinghua.edu.cn;\
lipingzhang@mail.tsinghua.edu.cn
}

\date{}

\begin{document}
\maketitle

\begin{abstract}
We study factorized gradient descent (FGD) for overparameterized asymmetric matrix sensing with the balancing regularizer $\frac18\|L^\top L-R^\top R\|_F^2$. An exact indefinite lifting of the population objective decomposes the dynamics of FGD into positive, negative, and null spectral subspaces. The first two components have linear restoring terms, whereas the null component contracts only quadratically. For independent isotropic sub-Gaussian sensing matrices and independent centered sub-Gaussian noise with scale proxy $\sigma$, we prove the local bound
\[
 \|L_tR_t^\top-X^\star\|_2
 \lesssim\max\left\{\frac{1}{\eta t+1/\sigma_r},
 \kappa\sigma\sqrt{\frac{d_1+d_2}{n}}\right\},
\]
under a sufficiently small initialization error,
the number of linear measurements $n\gtrsim\kappa^2(d_1+d_2)\max\{k^2,\sigma^2/\sigma_r^2\}$, and step size $\eta\lesssim\sigma_1^{-1}$. Here $\sigma_1$ and $\sigma_r$ are the largest and smallest nonzero singular values of the target $X^\star\in \R^{d_1\times d_2}$ respectively, and $\kappa=\sigma_1/\sigma_r$. Furthermore, a population trajectory was constructed to have tight product error $\Theta(1/t)$. These results provide a local rate characterization for ordinary gradient descent for overparameterized asymmetric matrix sensing with balancing regularizer.
\end{abstract}

\noindent\textbf{Keywords:} overparameterized asymmetric matrix sensing; gradient descent; balancing regularization; local convergence; statistical accuracy.

\section{Introduction}
Low-rank matrix sensing seeks to recover an unknown rank-\(r\) matrix
\(X^\star\in\R^{d_1\times d_2}\) from noisy linear measurements
\begin{equation}
y_i=\langle A_i,X^\star\rangle+\epsilon_i,
\quad i=1,\ldots,n.
\label{eq:intro-model}
\end{equation}
This model arises in signal processing, array data analysis, image reconstruction, and quantum-state tomography
\cite{chi2019,dokmanic2015edm,gross2010quantum,wen2012,zhao2010}.
Its factorized formulation also provides a useful testbed for studying optimization dynamics and convergence behavior since it is nonconvex and retains several characteristic phenomena \cite{arora2019deep,gunasekar2017,li2018,li2021greedy}.

In high-dimensional settings, a standard approach is to represent the unknown matrix as the product of two low-dimensional factors,
$X=LR^\top, L\in\R^{d_1\times k},  R\in\R^{d_2\times k}$,
thereby avoiding repeated large-scale singular value decompositions during optimization. Gradient descent is a standard method for solving the resulting factorized problem. With a correctly specified factor width \(k=r\), nonconvex analyses establish local linear convergence under suitable measurement and initialization conditions, together with favorable landscape properties of the factored objective \cite{tu2016,chenwainwright2015,ge2017,park2018}; see \cite{chi2019} for an overview. 

In practice, however, the target rank \(r\) is usually unknown, and one often chooses a factor width \(k\ge r\). It is called overparameterized when \(k>r\), and overparameterization plays a key role in optimization, especially for
neural networks  \cite{jacot2018neural,zou2020gradient,fang2021modeling}. The overspecification increases modeling flexibility but also introduces redundant directions with degenerate local curvature near a low-rank solution, making the landscape riddled with saddle points and lacks global smoothness and a global PL condition. %This problem has thus become a canonical benchmark in theoretical deep learning \cite{wei2026benefits}.

%For overparameterized symmetric matrix sensing, Zhuo et al. \cite{zhuo2024} characterized the dynamics and statistical accuracy of ordinary gradient descent under noisy measurements. In the noiseless asymmetric setting, Xiong et al. \cite{xiong2024} established initialization-dependent geometric convergence under their random-initialization and sensing assumptions. 

Asymmetric factorizations are non-identifiable since $LR^\top=(LH)(RH^{-\top})^\top$ for every invertible matrix \(H\). This invariance can create a severe scale imbalance between the two factors. A standard way to control this ambiguity is to augment the objective with the balancing regularizer $\|L^\top L-R^\top R\|_F^2$. This penalty is widely used in error analyses and landscape theory for asymmetric matrix sensing \cite{ge2017,park2018,zhu_geometry2021}. Although the regularized objective and its population geometry have been studied, the precise local convergence rate of gradient descent in the noisy overparameterized setting remains less well understood. 

Motivated by this gap, we analyze the discrete-time dynamics of ordinary gradient descent for overparameterized asymmetric matrix sensing with an explicit balancing regularizer.

\subsection{The regularized model}

Let $y_i=\langle A_i,X^\star\rangle+\epsilon_i$, where
$X^\star\in\R^{d_1\times d_2}$ has rank $r$. For a chosen width $k\ge r$, consider
\begin{equation}
 \min_{L\in\R^{d_1\times k},\,R\in\R^{d_2\times k}}
 \Phi_n(L,R)
 :=\frac1{2n}\sum_{i=1}^n
 (\langle A_i,LR^\top\rangle-y_i)^2
 +\frac18\|L^\top L-R^\top R\|_F^2.
 \label{eq:intro-objective}
\end{equation}
We analyze simultaneous gradient updates of $L$ and $R$ with a common constant step size. Our central question is
\begin{quote}
What local convergence rate and noise accuracy does gradient descent achieve for explicitly balanced asymmetric matrix sensing when the factor width is overspecified?
\end{quote}

\subsection{Results and technical contributions}

Write $D=d_1+d_2$ and $\kappa=\sigma_1(X^\star)/\sigma_r(X^\star)$.
Our main result is a local recovery guarantee for gradient descent with the balancing coefficient fixed at $1/8$. This coefficient makes the problem particularly tractable. With $Z=[L^\top,R^\top]^\top$ and $C^\star=\left[\begin{smallmatrix}0&X^\star\\X^{\star\top}&0\end{smallmatrix}\right]$, the population objective equals $\frac18\|ZZ^\top-2C^\star\|_F^2$ up to a constant. With this indefinite lifting, we decompose the spectral space of the population objective into three parts, which shows the relation to symmetric analyses while exposing the additional blocks needed for the asymmetric regularized dynamics. 

\begin{enumerate}
\item \textbf{Local recovery rates under noise.}
For independent isotropic sub-Gaussian sensing matrices and independent centered sub-Gaussian noise with scale proxy $\sigma$, we prove with high probability that
\[
 \|L_tR_t^\top-X^\star\|_2
 \lesssim\max\left\{\frac1{\eta t+1/\sigma_r(X^\star)},
 \kappa\sigma\sqrt{\frac Dn}\right\},\quad t\ge0.
\]
The guarantee assumes a sufficiently close initialization,
$\eta\lesssim1/\sigma_1(X^\star)$, and
\[
n\gtrsim\kappa^2D\max\{k^2,\sigma^2/\sigma_r(X^\star)^2\}.
\]

\item \textbf{Spectral mechanisms governing the local rate.}
We derive exact discrete-time recursions for six Gram blocks associated with the positive, negative, and null spectral subspaces of the population lifting. Blocks coupled to the first two spectral subspaces have linear restoring terms, whereas the block in the null spectral subspace contracts quadratically. 

\item \textbf{A tight sublinear example.}
A structured population trajectory has tight product error $\Theta(1/t)$. The empirical null-block estimate retains first-order forcing proportional to the current block error, which yields an $O(\kappa\sigma\sqrt{D/n})$ recovery radius and squared Frobenius error $O(k\kappa^2D\sigma^2/n)$ once the optimization term reaches that scale. 
\end{enumerate}

\subsection{Related work}
\label{sec:comparison}

Convex approaches, such as nuclear-norm minimization, provide fundamental statistical recovery guarantees for matrix sensing
\cite{candesplan2011,recht2010}. Their computational cost, however, can become prohibitive in high-dimensional settings. Factorized methods instead represent the unknown matrix as the product of two low-dimensional factors, thereby reducing both the number of variables and the computational cost. This parameterization introduces nonconvexity and poses additional challenges for theoretical analysis. Gradient descent and its variants form a standard class of algorithms for solving the resulting factorized problem.

In the exactly parameterized setting \(k=r\), representative works by Zheng and Lafferty \cite{zheng2015} and Tu et al. \cite{tu2016} establish linear convergence of gradient-based methods under suitable measurement and initialization conditions. For noiseless overparameterized symmetric matrix sensing, where \(k>r\), St{\"o}ger and Soltanolkotabi \cite{stoger2021} show that gradient descent with a sufficiently small random initialization achieves recovery guarantees that depend on the initialization scale. Zhuo et al. \cite{zhuo2024} study the noisy overparameterized symmetric setting and show that ordinary gradient descent converges sublinearly to the statistical error. Jin et al. \cite{jin2023} provide a fine-grained analysis of the corresponding dynamics and show that gradient descent with small initialization learns the spectral components incrementally.  For noiseless asymmetric matrix sensing, Xiong et al. \cite{xiong2024} establish initialization-dependent geometric convergence under their random-initialization and sensing assumptions, while Wind \cite{wind2025} studied the robustness of the dynamics to measurement noise and stochastic perturbations.

A separate line of work modifies the optimization metric to mitigate the effects of overparameterization and ill-conditioning. Scaled and preconditioned gradient methods have been developed for low-rank matrix estimation and overparameterized matrix sensing
\cite{tong2021,zhang2021,xu2023,zhang2024noisy}. In particular, Zhang et al. \cite{zhang2021} introduce a preconditioned gradient method, showing local linear convergence for overparameterized symmetric matrix sensing. For noisy overparameterized asymmetric matrix sensing, alternating preconditioned gradient descent has also been shown to converge rapidly to the corresponding statistical accuracy \cite{liu2025apgd}.

The works above primarily analyze the trajectories generated by gradient decent algorithm. A complementary line of research studies the optimization landscape of factored low-rank problems. Under suitable restricted isometry or regularity conditions, these results establish benign geometric properties, such as the absence of spurious local minima and the existence of negative curvature at non-global critical points
\cite{ge2017,zhu2018,li2019,zhu_geometry2021}. %Zhu et al. \cite{zhu2018} analyze general low-rank matrix optimization and establish global optimality and strict-saddle properties under appropriate restricted strong convexity and smoothness conditions. Li et al. \cite{li2019} develop a general framework for characterizing the effects of symmetry on the stationary points and global landscape of nonconvex matrix factorization. Zhu et al. \cite{zhu_geometry2021} further sharpen the global geometric analysis for both symmetric and asymmetric low-rank optimization.
More generally, Zhang et al. \cite{zhang2021a} establish explicit restricted-isometry conditions under which general low-rank minimization problems have no spurious second-order critical points and satisfy the strict-saddle property. Bi et al. \cite{bi2022} verify a local Polyak-{\L}ojasiewicz inequality, and obtain global convergence guarantees for perturbed gradient descent with a strict-saddle analysis. These landscape results are formulated primarily for the exactly parameterized setting \(k=r\); in the asymmetric case, the analysis commonly introduces a balancing regularizer of the form
$\|U^\top U-V^\top V\|_F^2.$

We review the literature on the convergence rate of ordinary gradient descent for matrix sensing in Table \ref{tab:gd-comparison}, and our paper fills the gap in the study of the convergence rate of gradient descent for noisy overparameterized asymmetric matrix sensing with an explicit balancing regularizer.

\begin{table}[t]
\centering
\small
\caption{Convergence results for gradient descent in matrix sensing. ``No'' indicates the absence of an explicit balancing penalty. }
\label{tab:gd-comparison}
\setlength{\tabcolsep}{4pt}
\renewcommand{\arraystretch}{1.15}
\begin{tabularx}{\textwidth}{@{}>{\raggedright\arraybackslash}p{2.8cm}>{\raggedright\arraybackslash}p{3.25cm}>{\raggedright\arraybackslash}p{2.85cm}>{\raggedright\arraybackslash}X@{}}
\toprule
Reference & Model/balancing & Rank/noise & Convergence rate \\
\midrule
Ma et al.\ \cite{mali2021} & asymmetric / no & Matched / yes & Local geometric recovery\\
Zhuo et al.\ \cite{zhuo2024} & symmetric / --- & Overspecified / yes & Local sublinear recovery\\
Xiong et al.\ \cite{xiong2024} & asymmetric / no &  Overspecified / no & Initialization-dependent geometric rate\\
% Zhang et al.\ \cite{zhang2024noisy} & PSD / --- & Preconditioned GD & Overspecified; local geometric recovery\\
% Liu et al.\ \cite{liu2025apgd} & asymmetric / no & Preconditioned GD & Overspecified; local geometric recovery\\
This paper & asymmetric / explicit & Overspecified/yes & Local sublinear recovery\\
\bottomrule
\end{tabularx}
\end{table}

\subsection{Organization}
\Cref{sec:main-result} states the model and main guarantees.
\Cref{sec:lifting} develops the lifted population dynamics and a tight example. \Cref{sec:concentration} proves uniform empirical control and the master recursion, and \cref{sec:main-proof} completes the convergence proofs. \Cref{sec:limitations} concludes. The appendices give
second-order expansions and the empirical gradient calculation.

\subsection{Notation}
For a positive integer $m$, write $[m]=\{1,\ldots,m\}$.  The transpose, rank, trace, and identity matrix are denoted by $M^\top$, $\rank(M)$, $\operatorname{tr}(M)$, and $I_m$, respectively; the dimension of an identity or zero matrix is omitted when it is clear.  The Frobenius inner product is $\ip{A}{B}=\operatorname{tr}(A^\top B)$.  We write $\|M\|_2$ and $\|M\|_F$ for the operator and Frobenius norms, and $\sigma_j(M)$ for the $j$-th largest singular value.  For symmetric matrices, $A\preceq B$ means that $B-A$ is positive semidefinite, while $\lambda_{\min}(A)$ and $\lambda_{\max}(A)$ denote the extreme eigenvalues.  The Orlicz norms $\|X\|_{\psi_2}$ and $\|X\|_{\psi_1}$ quantify sub-Gaussian and sub-exponential tails. The relation $a\lesssim b$
means $a$ is bounded above by $b$ up to a constant factor, i.e., $a\le Cb$, and the constant in this paper depending only on the fixed sub-Gaussian norm bounds $K_A$ and $K_\epsilon$ introduced below. If $a\lesssim b$ and $b \lesssim a$, then we say $a\asymp b$. All probability statements are with respect to the sensing matrices and observation noise.  Vacuous block rows and columns are interpreted as zero in boundary cases such as $d_1+d_2=2r$.

\subsection{Sub-Gaussian and sub-exponential variables}
For $\alpha\ge1$, let $\psi_\alpha(s)=\exp(s^\alpha)-1$ for $s\ge0$.  The
Luxemburg-Orlicz norm \cite{rao1991} generated by $\psi_\alpha$ is
\begin{equation}
 \|X\|_{\psi_\alpha}
 :=\inf\left\{t>0:
 \E\exp\!\left(\frac{|X|^\alpha}{t^\alpha}\right)\le2\right\}.
 \label{eq:orlicz-definition}
\end{equation}
We call $X$ \emph{sub-Gaussian} when $\|X\|_{\psi_2}<\infty$ and
\emph{sub-exponential} when $\|X\|_{\psi_1}<\infty$.  The number $2$ in
\eqref{eq:orlicz-definition} is conventional; replacing it by any fixed constant larger than one gives an equivalent norm. The probabilistic use is reviewed in \cite[Chapter~2]{vershynin2018}.  For $\alpha\in\{1,2\}$,
\begin{equation}
 \|X\|_{\psi_\alpha}
 \asymp \sup_{p\ge1}p^{-1/\alpha}(\E|X|^p)^{1/p},
 \quad
 \Prob(|X|\ge u)\le
 2\exp\!\left[-\left(\frac{u}{\|X\|_{\psi_\alpha}}\right)^\alpha\right].
 \label{eq:orlicz-characterizations}
\end{equation}
The comparison constants depend only on $\alpha$; the displayed tail bound
is used for positive Orlicz norm and $u>0$. 
A random vector $a\in\R^m$ is \emph{isotropic} if
$\E[aa^\top]=I_m$, and is $K$-sub-Gaussian if
\begin{equation}
 \sup_{x\in\mathbb S^{m-1}}
 \|\langle a,x\rangle\|_{\psi_2}\le K.
 \label{eq:subgaussian-vector}
\end{equation}

\section{Model and main results}
\label{sec:main-result}

This section formulates the noisy asymmetric matrix-sensing problem and presents the principal convergence guarantees studied in the remainder of the paper. 

\subsection{Observation model}

Let \begin{equation}
 X^\star=P\Sigma Q^\top,\quad
 \Sigma=\diag(\sigma_1,\ldots,\sigma_r),\quad
 \sigma_1\ge\cdots\ge\sigma_r>0
 \label{eq:svd}
\end{equation}
be the singular-value decomposition of a rank-$r$ target matrix in
$\R^{d_1\times d_2}$. Here $P\in\R^{d_1\times r}$ and
$Q\in\R^{d_2\times r}$ have orthonormal columns.  We write $D=d_1+d_2$ and $\kappa=\frac{\sigma_1}{\sigma_r}$ for the combined dimension and the condition number. We observe
\begin{equation}
 y_i=\ip{A_i}{X^\star}+\epsilon_i,\quad i=1,\ldots,n.
 \label{eq:model}
\end{equation}
Here $A_i\in\R^{d_1\times d_2}$ is the $i$-th sensing matrix and
$\epsilon_i$ is additive observation noise. The pairs $(A_i,\epsilon_i)$ are independent across $i$, and $A_i$ is independent
of $\epsilon_i$ within each pair.  We assume that $A_i$ is centered, isotropic in
Frobenius geometry, and uniformly sub-Gaussian: for every deterministic
$M\in\R^{d_1\times d_2}$,
\begin{equation}
 \E A_i=0,\quad
 \E[\ip{A_i}{M}A_i]=M,\quad
 \|\ip{A_i}{M}\|_{\psi_2}\le K_A\|M\|_F.
 \label{eq:sensing-assumption}
\end{equation}
The noise is centered and sub-Gaussian with scale proxy $\sigma$:
\begin{equation}
 \E\epsilon_i=0,\quad
 \|\epsilon_i\|_{\psi_2}\le K_\epsilon\sigma.
 \label{eq:noise-assumption}
\end{equation}
The constants $K_A,K_\epsilon\ge1$ are fixed throughout.

We represent the unknown matrix as $X=LR^\top$ using factor
width $k\ge r$, with $L\in\R^{d_1\times k}$ and
$R\in\R^{d_2\times k}$, and minimize
\begin{equation}
 \Phi_n(L,R)=\frac1{2n}\sum_{i=1}^n
 \bigl(\ip{A_i}{LR^\top}-y_i\bigr)^2
 +\frac18\|L^\top L-R^\top R\|_F^2.
 \label{eq:objective}
\end{equation}
The first term is empirical least squares.  The second term penalizes the noncompact rescaling symmetry $LR^\top=(LH)(RH^{-\top})^\top$ and vanishes at the balanced target factors $L^\star=[P\Sigma^{1/2},0]$ and $R^\star=[Q\Sigma^{1/2},0]$, padded to width $k$. The coefficient is fixed at $1/8$ since it will produce the exact symmetric lifting in \cref{lem:lifting}. Moreover, the distributional assumptions show that the population counterpart of \eqref{eq:objective} is, up to a constant independent of $(L,R)$,
\begin{equation}
 \Phi(L,R):=\E\Phi_n(L,R)-c_\epsilon
 =\frac12\|LR^\top-X^\star\|_F^2
  +\frac18\|L^\top L-R^\top R\|_F^2,
 \quad
 c_\epsilon:=\frac1{2n}\sum_{i=1}^n\E\epsilon_i^2.
 \label{eq:population-objective}
\end{equation}
Indeed, the cross term vanishes by centering and independence, and
\eqref{eq:sensing-assumption} gives
$\E\langle A_i,LR^\top-X^\star\rangle^2
=\|LR^\top-X^\star\|_F^2$.

Starting from $(L_0,R_0)$, gradient descent with a
constant step size $\eta$ performs
\begin{equation}
 L_{t+1}=L_t-\eta\nabla_L\Phi_n(L_t,R_t),\quad
 R_{t+1}=R_t-\eta\nabla_R\Phi_n(L_t,R_t).
 \label{eq:gd}
\end{equation}

Define the stacked factor, a balanced target factor, and its Gram matrix by
\begin{equation}
 Z_t=\begin{bmatrix}L_t\\R_t\end{bmatrix},\quad Z^\star=\begin{bmatrix}P\Sigma^{1/2}\\Q\Sigma^{1/2}\end{bmatrix},\quad
 M_+=Z^\star Z^{\star\top}=\begin{bmatrix}
 P\Sigma P^\top&X^\star\\X^{\star\top}&Q\Sigma Q^\top
 \end{bmatrix}.
 \label{eq:target}
\end{equation}

\subsection{Main results}
We now state the local convergence guarantees of factorized gradient descent for overparameterized asymmetric matrix sensing with a balancing regularization.

\begin{theorem}[Noisy asymmetric sensing with $k\ge r$]
\label{thm:main}
There exist positive constants
\[
 c_{\rm thm},\ c_{\rm prob},\ C_{\rm thm}>0,
\]
depending only on $K_A$ and $K_\epsilon$.  Suppose
\begin{align}
 \|Z_0Z_0^\top-M_+\|_2&\le c_{\rm thm}\sigma_r,\label{eq:init}\\
 0<\eta&\le c_{\rm thm}/\sigma_1,\label{eq:stepsize}\\
 n&\ge C_{\rm thm}\kappa^2D\max\left\{k^2,\frac{\sigma^2}{\sigma_r^2}\right\}.
 \label{eq:samples}
\end{align}
Then, with probability at least $1-D^{-c_{\rm prob}}$, simultaneously for every $t\ge0$,
\begin{equation}
 \|L_tR_t^\top-X^\star\|_2
 \le C_{\rm thm}\max\left\{\frac1{\eta t+1/\sigma_r},
 \eps_{\rm stat}\right\},
 \quad
 \eps_{\rm stat}:=\kappa\sigma\sqrt{\frac Dn}.
 \label{eq:mainrate}
\end{equation}
If $\eps_{\rm stat}>0$ and $t\ge 1/(\eta\eps_{\rm stat})$, then
\begin{equation}
 \|L_tR_t^\top-X^\star\|_2\le C_{\rm thm}\eps_{\rm stat},\quad
 \|L_tR_t^\top-X^\star\|_F^2
 \le 2C_{\rm thm}k\kappa^2\frac Dn\sigma^2.
 \label{eq:statistical}
\end{equation}
When $\sigma=0$, $\eps_{\rm stat}$ reduces to 0, and then the iterates converge to exact recovery.
\end{theorem}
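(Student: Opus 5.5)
The plan is to work throughout in the lifted variable $Z_t=[L_t;R_t]\in\R^{D\times k}$. I take the lifting identity advertised in the introduction (\cref{lem:lifting}): $\Phi(L,R)=\frac18\|ZZ^\top-2C^\star\|_F^2+\mathrm{const}$, with
\[
C^\star=\begin{bmatrix}0&X^\star\\X^{\star\top}&0\end{bmatrix}.
\]
The matrix $2C^\star$ has eigenvalues $\pm2\sigma_j$. Its positive eigenspace is spanned by the columns of $U_+=\frac1{\sqrt2}[P;Q]$, its negative eigenspace by $U_-=\frac1{\sqrt2}[P;-Q]$, and its null space $U_0$ has dimension $D-2r$. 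Note that $M_+=Z^\star Z^{\star\top}=2U_+\Sigma U_+^\top$.

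Population gradient descent then reads
\[
Z_{t+1}=Z_t-\tfrac{\eta}{2}(Z_tZ_t^\top-2C^\star)Z_t .
\]
The empirical version adds a perturbation $\eta\,\mathcal E_tZ_t$, where
\[
\mathcal E_t=\operatorname{sym}\Big(\tfrac1n\textstyle\sum_i(\langle A_i,\Delta_t\rangle-\epsilon_i)A_i-\Delta_t\Big),\qquad \Delta_t=L_tR_t^\top-X^\star,
\]
is embedded in the off-diagonal block.

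Next I write $Z_t$ in the basis $(U_+,U_-,U_0)$ and track the six Gram blocks $G_{ab}=U_a^\top Z_tZ_t^\top U_b$. For each block I derive exact one-step recursions. Since $\|\Delta_t\|_2\le\|Z_tZ_t^\top-M_+\|_2$, it suffices to control these blocks.

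\begin{itemize}
\item \emph{Positive block.} $G_{++}-2\Sigma$ contracts linearly, at rate roughly $1-\eta\sigma_r$ while it is near $2\Sigma$.
\item \emph{Negative-coupled blocks.} Blocks involving $U_-$ contract linearly, because $-2C^\star$ restores along $U_-$ with rate about $\eta\sigma_r$.
\item \emph{Null block.} $G_{00}$ satisfies $G_{00}^{+}=G_{00}-\eta G_{00}^2+(\text{cross terms})$. This is purely quadratic decay, so it gives $\|G_{00}\|\lesssim 1/(\eta t+1/\sigma_r)$, just as for the scalar recursion $g_{t+1}=g_t-\eta g_t^2$.
\item \emph{Cross blocks $G_{+0},G_{-0}$.} These are slaved to $G_{00}$. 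They decay linearly up to terms of size $\|G_{00}\|\cdot\|Z\|$, so they remain of order $\|G_{00}\|$.
\end{itemize}

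Combining the blocks gives a master potential, for example $a_t=\|G_{00}\|$ together with $b_t$ equal to the other blocks normalized by $\sigma_r$. The population recursion has the form $a_{t+1}\le a_t-c\eta a_t^2+\eta\,O(b_t a_t)$ and $b_{t+1}\le(1-c\eta\sigma_r)b_t+O(\eta a_t^2)$. Induction then gives $\|Z_tZ_t^\top-M_+\|_2\lesssim 1/(\eta t+1/\sigma_r)$. The induction hypothesis $\|Z_tZ_t^\top-M_+\|_2\le 2c_{\rm thm}\sigma_r$ keeps all iterates in the local region; this is where \eqref{eq:init} and \eqref{eq:stepsize} are used.

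For the empirical part, I need a uniform concentration bound valid simultaneously over all matrices of rank at most $2k$ and over all $t$. The target bound is
\[
\|\mathcal E_t\|_2\lesssim \sqrt{\frac{kD}{n}}\,\|\Delta_t\|_F+\sigma\sqrt{\frac Dn},
\]
holding with probability $1-D^{-c}$. I would prove it by an $\varepsilon$-net over low-rank matrices and sub-exponential Bernstein bounds for products $\langle A_i,M\rangle\langle A_i,N\rangle$, plus a sub-Gaussian noise term $\|\frac1n\sum\epsilon_iA_i\|_2$. Using $\|\Delta\|_F\le\sqrt{2k}\|\Delta\|_2$, the first term becomes $\sqrt{k^2D/n}\,\|\Delta_t\|_2$. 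The sample size condition $n\gtrsim\kappa^2k^2D$ makes this at most a small multiple of $\sigma_r/\kappa$ times $\|\Delta_t\|_2/\sigma_r$, so it is absorbed into the linear restoring terms of the $\pm$ blocks.

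The main obstacle is the null block, which has no linear restoring term. There the perturbation enters as $\eta\,U_0^\top(\mathcal E_tZ_tZ_t^\top+Z_tZ_t^\top\mathcal E_t)U_0$. Because $U_0^\top Z_t$ is small, this forcing is first order in the block error rather than of order $\|Z\|^2$. The null recursion therefore becomes $a_{t+1}\le a_t-c\eta a_t^2+C\eta\sqrt{\sigma_1}\,\|\mathcal E_t\|\,\sqrt{a_t}$, or with an additional linear term $C\eta\|\mathcal E_t\|a_t$ depending on how the cross blocks are bounded. I would show the forcing is dominated by $c\eta a_t^2$ whenever $a_t\gtrsim\kappa\sigma\sqrt{D/n}$; the factor $\kappa$ arises from converting $\sqrt{\sigma_1}$-size factors and $\sigma_r$-scale rates. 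This yields the stated maximum in \eqref{eq:mainrate}. I would verify this by comparing with the scalar recursion $a_{t+1}=a_t-\eta a_t^2+\eta\varepsilon a_t$, whose trajectory stays below $\max\{2/(\eta t+1/\sigma_r),2\varepsilon\}$.

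Finally, \eqref{eq:statistical} follows by substituting $t\ge1/(\eta\eps_{\rm stat})$ into \eqref{eq:mainrate}. The Frobenius bound uses that $\rank\Delta_t\le k+r\le2k$, so $\|\Delta_t\|_F^2\le 2k\|\Delta_t\|_2^2$, with constants adjusted.
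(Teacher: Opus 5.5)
Your outline follows the paper's proof in all essentials. It uses the same lifting and the same six Gram blocks in the basis $[U_+,U_-,W]$. The five blocks touching $U_\pm$ have linear restoring, while $G_t$ contracts only quadratically. The perturbation bound is the same uniform rank-$2k$ bound $\delta_t\lesssim a_n\dist_t+b_n$; the paper obtains it from Dirksen's rank-RIP plus polarization rather than a direct net. Both end with a scalar $1/t$ comparison.

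Instead of two coupled potentials, the paper uses the single quantity $\dist_t$, the maximum of the six block norms. It proves $\dist_{t+1}\le\dist_t-c\eta\dist_t^2$ outside a ball of radius $\asymp\kappa b_n$, and invariance of that ball, by splitting on whether $\dist_t^{\rm nz}\ge\dist_t/2$. That split is genuinely needed. $\Delta_t$ can be carried entirely by the null block (as in \cref{prop:sharp-sublinear}), so the bound $C\eta\sigma_1(a_n\dist_t+b_n)$ on the empirical forcing of the $\pm$ blocks need not be proportional to those blocks. It therefore cannot simply be ``absorbed into their linear restoring terms'' as you claim. In the population dynamics, by contrast, the $\pm$ blocks are not forced by $G_t$ at all: every forcing term in \eqref{eq:Arec}--\eqref{eq:Frec} contains a nonzero block. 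So the $\|G_{00}\|\cdot\|Z\|$ forcing you describe does not occur.

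The step you leave undecided is the one that matters. Of your two null-block forcings, only the linear one, $C\eta\|\mathcal E_t\|_2\dist_t$, is usable. The bound $C\eta\sqrt{\sigma_1 a_t}\,\|\mathcal E_t\|_2$ (your $a_t=\|G_{00}\|_2$) is all that ``$U_0^\top Z_t$ is small'' gives, since $\|T_t\|_2=\sqrt{a_t}$, and it fails. Even with $\sigma=0$ and $\dist_t=a_t$, the available bound $\|\mathcal E_t\|_2\lesssim a_n a_t$ makes this forcing $\eta a_n\sqrt{\sigma_1}\,a_t^{3/2}$. That is dominated by $c\eta a_t^2$ only when $a_t\gtrsim a_n^2\sigma_1$, a floor that does not vanish, so exact recovery would not follow. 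Its noise part leaves a floor $(\sigma_1b_n^2)^{1/3}$, which is much larger than $\kappa b_n$ for small noise.

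The correct estimate, \eqref{eq:sharpG}, comes from $M_+U_0=0$:
\[
Z_tZ_t^\top U_0=U_+G_{+0}+U_-G_{-0}+U_0G_{00},
\]
so every factor multiplying $\mathcal E_t$ is an error block. The paper checks this with $T_t^+$ in place of $T_t$ and treats the $\eta^2P_{T,t}P_{T,t}^\top\lesssim\eta^2\sigma_1\delta_t^2$ term separately.

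Relatedly, the $\kappa$ in $\eps_{\rm stat}$ does not come from the null block, whose own noise floor is only of order $b_n$. It comes from the $\pm$ blocks, where the forcing $\eta\sigma_1b_n$ competes with the restoring rate $\eta\sigma_r$.
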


In the exact rank case, we can further obtain a linear convergence result.
\begin{corollary}
\label{cor:matched}
Under the conditions of \cref{thm:main}, suppose in addition that $k=r$ or $D=2k$. Then there exist constants $c_{\rm lin},C_{\rm lin}>0$, depending only on $K_A$ and $K_\epsilon$, such that, for every $t\ge0$,
\begin{equation}
 \|L_tR_t^\top-X^\star\|_2
 \le C_{\rm lin}\max\left\{
 (1-c_{\rm lin}\eta\sigma_r)^t
 \|Z_0Z_0^\top-M_+\|_2,\ \eps_{\rm stat}\right\}.
 \label{eq:matchedrate}
\end{equation}
In particular, the noiseless iterates converge geometrically to exact recovery.
\end{corollary}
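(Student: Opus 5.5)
The plan is to reuse the block decomposition behind \cref{thm:main} and observe that, when $k=r$ or $D=2k$, the null block that caused the sublinear rate is absent. Work with the lifted variable $Z_t$ and the spectral decomposition of the population lifting $2C^\star$. This matrix has eigenvalues $\pm\sigma_j$ on the positive and negative subspaces, spanned by $\tfrac1{\sqrt2}[P;Q]$ and $\tfrac1{\sqrt2}[P;-Q]$, and the remaining $D-2r$ directions form the null subspace. The six Gram blocks of $Z_tZ_t^\top-M_+$ split along (positive, negative, null). The recursions already derived for \cref{thm:main} show that every block coupled to the positive or negative subspace has a linear restoring term of size at least $c\eta\sigma_r$. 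The only block without one is the null--null block $N_t$, which obeys a recursion of the form $N_{t+1}=N_t-\eta N_t^2+(\text{higher order})+(\text{empirical forcing})$.

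\textbf{Case $D=2k$.} If $r=k$ this falls under the other case. Otherwise the null subspace has dimension $D-2r=2(k-r)>0$, so the blocks are not vacuous. In this case we must instead show that $N_t$ inherits restoring curvature. The rank of $Z_t$ is at most $k$ and $D=2k$, and the negative-subspace component is driven to zero. So near $M_+$ the column space of $Z_t$ fills the positive subspace, and the null subspace is not "free"; at first sight this seems to give no linear term. We therefore treat this case by controlling the full residual $Z_tZ_t^\top-M_+$ through the negative-block restoring term. Specifically, the balancing term gives $-\eta(ZZ^\top-2C^\star)Z$ with $2C^\star$ indefinite. The null component of $Z$ then picks up a linear damping from the constraint $Z^\top Z\approx$ nonsingular of size $\sigma_r$, because $Z_t$ has full column rank $k$ with $\sigma_k(Z_t)^2\gtrsim\sigma_r$ once the error is small. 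Hence the whole $k$-dimensional column space carries curvature at least $\sigma_r$. The point I expect to be the main obstacle is exactly this: verifying that in the square case $D=2k$ the null-block recursion acquires a linear term $-c\eta\sigma_r N_t$ rather than only the quadratic one. I would first try to prove it via the identity $Z_t^\top Z_t$ well-conditioned on its $k$ dimensions, combined with orthogonality of the null and negative subspaces, using the explicit formulas from \cref{lem:lifting}.

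\textbf{Case $k=r$.} Here $Z_t$ has rank at most $r$, and $Z_t^\top Z_t\succeq c\sigma_r I_r$ in the local region. The null-block error is then quadratic in the off-diagonal blocks, i.e.\ $\|N_t\|_2\lesssim\|E^{+0}_t\|_2^2/\sigma_r$. This estimate is controlled once the cross block $E^{+0}_t$ contracts linearly.

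\textbf{Common part of both cases.} In both cases we get a master recursion
\[
\|E_{t+1}\|_2\le(1-c\eta\sigma_r)\|E_t\|_2+C\eta\sigma_r\eps_{\rm stat}+C\eta\|E_t\|_2^2 .
\]
Here the forcing term is taken from the uniform empirical control already established for \cref{thm:main}, on the same event of probability $1-D^{-c_{\rm prob}}$. Induction inside the radius $c_{\rm thm}\sigma_r$ gives $\|E_t\|_2\le C\max\{(1-c\eta\sigma_r/2)^t\|E_0\|_2,\eps_{\rm stat}\}$. Finally, $L_tR_t^\top-X^\star$ is the off-diagonal block of $E_t$, so its operator norm is bounded by $\|E_t\|_2$. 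The noiseless claim follows by taking $\eps_{\rm stat}=0$.
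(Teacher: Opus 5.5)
\textbf{Gap in your $D=2k$ branch.} When $k>r$ this branch cannot be completed, because the step you single out as the main obstacle is false. Since $\|Z_tZ_t^\top-M_+\|_2\le c\sigma_r$ and $\rank M_+=r<k$, Weyl's inequality gives $\sigma_{r+1}(Z_t)^2\le c\sigma_r$. Moreover $\sigma_k(Z_t)$ can vanish (e.g.\ at the padded minimizer $[Z^\star,0]$), so $Z_t$ is not well conditioned on $k$ dimensions. Concretely, the null rows $T_t^+=T_t(I-\eta H_t/2)$ are damped only through $T_tH_tT_t^\top=C_t^\top C_t+F_t^\top F_t+G_t^2$. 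This equals $G_t^2$ whenever the rows of $T_t$ are orthogonal to those of $S_t$ and $N_t$, so no term $-c\eta\sigma_r G_t$ is available.

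In fact \cref{prop:sharp-sublinear} has $d_1=d_2=k=2$ and $r=1$, i.e.\ $D=4=2k$. It exhibits noiseless population trajectories starting arbitrarily close to $M_+$ whose error is $\Theta(1/(\eta t))$. For each fixed $t$ the empirical iterates converge to the population ones as $n\to\infty$, while the good event keeps probability bounded away from zero. Hence no bound of the form \eqref{eq:matchedrate} with $n$-independent constants can hold when $D=2k>2r$.

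The paper's proof actually treats $D=2r$, and that is how the hypothesis has to be read. Then $\ker C^\star=\{0\}$, the $W$-rows are vacuous, $C_t=F_t=G_t=0$, and $\dist_t=\dist_t^{\rm nz}$ holds trivially. You should replace your $D=2k$ case by this one.

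\textbf{Your $k=r$ branch.} This branch and the common recursion are essentially the paper's argument. The mechanism that makes it work is invertibility of the square block $S_t$, not the lower bound on $Z_t^\top Z_t$ as such; the latter alone would allow $T_t$ to be large while $C_t$ is small. From $S_tS_t^\top=\Lambda+A_t\succeq(2-c_{\rm loc})\sigma_rI_r$ and $C_t=S_tT_t^\top$ one gets $G_t=C_t^\top(S_tS_t^\top)^{-1}C_t$. Hence $\|G_t\|_2\le\|C_t\|_2^2/((2-c_{\rm loc})\sigma_r)\le\dist_t^{\rm nz}$, and so $\dist_t=\dist_t^{\rm nz}$.

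Substitute this into \eqref{eq:nzmaster}. After shrinking $c_{\rm loc},c_{\rm step},c_{\rm pert}$, the $\dist_t^2$ and $a_n$ terms are absorbed into half of the contraction. Iterating the resulting affine recursion and using $\|L_tR_t^\top-X^\star\|_2\le3\dist_t$ then gives \eqref{eq:matchedrate}.

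Your labels $N_t$ (null block) and $E_t$ (full residual) clash with the paper's negative-coefficient block and $N_tN_t^\top$, which makes the argument harder to read.
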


\begin{remark}
The two entries in the maximum in \eqref{eq:mainrate} have different origins. In the genuinely overspecified case, the first is the optimization error controlled by the quadratically contracting null block; the second is the statistical radius created by the observation noise. The sample condition is equivalent, up to numerical constants, to
$a_n:=k\sqrt{D/n}\lesssim1/\kappa$ and $\kappa b_n\lesssim\sigma_r$, where $b_n:=\sigma\sqrt{D/n}$.
\end{remark}

\section{Lifted geometry and population dynamics}
\label{sec:lifting}

We first remove the sampling fluctuation and analyze the population objective, for which the dynamics can be controlled block by block. The empirical objective will be analyzed as a perturbation of this population function.

\subsection{Exact symmetric lifting}

Define
\begin{equation}
 C^\star=\begin{bmatrix}0&X^\star\\X^{\star\top}&0\end{bmatrix},
 \quad
 J=\begin{bmatrix}I_{d_1}&0\\0&-I_{d_2}\end{bmatrix}.
 \label{eq:CJ}
\end{equation}

\begin{lemma}
\label{lem:lifting}
The population objective
\[
 \Phi(L,R)=\frac12\|LR^\top-X^\star\|_F^2
 +\frac18\|L^\top L-R^\top R\|_F^2
\]
satisfies
\begin{equation}
 \Phi(L,R)=\frac18\|ZZ^\top-2C^\star\|_F^2+\mathrm{constant},
 \quad
 \nabla_Z\Phi(Z)=\frac12(ZZ^\top-2C^\star)Z.
 \label{eq:lift}
\end{equation}
\end{lemma}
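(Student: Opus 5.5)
The plan is to expand $\|ZZ^\top-2C^\star\|_F^2$ blockwise and compare it term by term with $4\Phi$ and $\|L^\top L-R^\top R\|_F^2$. Write
\[
ZZ^\top=\begin{bmatrix}LL^\top&LR^\top\\RL^\top&RR^\top\end{bmatrix},
\quad
ZZ^\top-2C^\star=\begin{bmatrix}LL^\top&LR^\top-2X^\star\\RL^\top-2X^{\star\top}&RR^\top\end{bmatrix}.
\]
The squared Frobenius norm is then
\[
\|LL^\top\|_F^2+\|RR^\top\|_F^2+2\|LR^\top-2X^\star\|_F^2 .
\]

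Next I rewrite the diagonal terms using the balancing Gram identity. Cyclicity of the trace gives $\|LL^\top\|_F^2=\|L^\top L\|_F^2$, $\|RR^\top\|_F^2=\|R^\top R\|_F^2$, and $\operatorname{tr}(L^\top L R^\top R)=\|LR^\top\|_F^2$. Hence
\[
\|L^\top L-R^\top R\|_F^2=\|LL^\top\|_F^2+\|RR^\top\|_F^2-2\|LR^\top\|_F^2 .
\]
Expanding the off-diagonal term gives
\[
2\|LR^\top-2X^\star\|_F^2=2\|LR^\top\|_F^2-8\ip{LR^\top}{X^\star}+8\|X^\star\|_F^2 .
\]
Adding the two,
\[
\|ZZ^\top-2C^\star\|_F^2=\|L^\top L-R^\top R\|_F^2+4\|LR^\top\|_F^2-8\ip{LR^\top}{X^\star}+8\|X^\star\|_F^2 .
\]
The last three terms equal $4\|LR^\top-X^\star\|_F^2+4\|X^\star\|_F^2$. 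Dividing by $8$ gives
\[
\tfrac18\|ZZ^\top-2C^\star\|_F^2=\Phi(L,R)+\tfrac12\|X^\star\|_F^2,
\]
which is the first claim with constant $-\tfrac12\|X^\star\|_F^2$.

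For the gradient, I differentiate $f(Z)=\frac18\|ZZ^\top-S\|_F^2$ with $S=2C^\star$ symmetric. A perturbation $Z\mapsto Z+\Delta$ changes $ZZ^\top$ by $Z\Delta^\top+\Delta Z^\top+O(\|\Delta\|^2)$. So
\[
df=\tfrac14\ip{ZZ^\top-S}{Z\Delta^\top+\Delta Z^\top}=\tfrac12\ip{(ZZ^\top-S)Z}{\Delta},
\]
where the last step uses symmetry of $ZZ^\top-S$. This gives $\nabla_Z\Phi=\frac12(ZZ^\top-2C^\star)Z$. Since the constant does not affect the gradient, this is the gradient of $\Phi$ with respect to the stacked variable, i.e. the pair $(\nabla_L\Phi,\nabla_R\Phi)$.

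There is no real obstacle. The only points needing care are the factor $2$ from the two off-diagonal blocks and the trace identity linking the cross term of $\|L^\top L-R^\top R\|_F^2$ to $\|LR^\top\|_F^2$. That identity is exactly why the coefficient $1/8$ produces an exact lifting.
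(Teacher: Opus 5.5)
Your proof is correct and follows essentially the same route as the paper. Both rest on the cyclic trace identity $\|L^\top L-R^\top R\|_F^2=\|LL^\top\|_F^2+\|RR^\top\|_F^2-2\|LR^\top\|_F^2$ and a blockwise expansion of $ZZ^\top-2C^\star$, and both arrive at the constant $-\tfrac12\|X^\star\|_F^2$. Your explicit differential calculation of $\nabla_Z\Phi$ supplies the ``direct computation'' that the paper leaves implicit.
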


\begin{proof}
The cyclic trace identity gives
\begin{align*}
 \|L^\top L-R^\top R\|_F^2
 &=\|LL^\top\|_F^2+\|RR^\top\|_F^2-2\|LR^\top\|_F^2,\\
 \|ZZ^\top\|_F^2
 &=\|LL^\top\|_F^2+\|RR^\top\|_F^2+2\|LR^\top\|_F^2,\\
 \ip{ZZ^\top}{C^\star}&=2\ip{LR^\top}{X^\star}.
\end{align*}
By substitution, we have
\[
 \Phi(L,R)=\frac18\|ZZ^\top\|_F^2-\ip{LR^\top}{X^\star}
 +\frac12\|X^\star\|_F^2=\frac18\|ZZ^\top-2C^\star\|_F^2-\frac12\|X^\star\|_F^2.
\]
By direct computation, we obtain the gradient formula.
\end{proof}

\subsection{Spectral coordinates and Gram blocks}
Set
\begin{equation}
 U_+=\frac1{\sqrt2}\begin{bmatrix}P\\Q\end{bmatrix},\quad
 U_-=\frac1{\sqrt2}\begin{bmatrix}P\\-Q\end{bmatrix},\quad
 \Lambda=2\Sigma.
 \label{eq:eigenspaces}
\end{equation}
Then $U_+^\top U_-=0$ and
\begin{equation}
 2C^\star=U_+\Lambda U_+^\top-U_-\Lambda U_-^\top=:M_+-M_-.
 \label{eq:spectral}
\end{equation}
Indeed, $C^\star U_+=U_+\Sigma$ and $C^\star U_-=-U_-\Sigma$.  Since $\rank(C^\star)=2r$, the orthogonal complement of $\operatorname{span}(U_+,U_-)$ is $\ker(C^\star)$.  Choose $W$ so that $[U_+,U_-,W]$ is orthogonal, and decompose uniquely
\begin{equation}
 Z_t=U_+S_t+U_-N_t+WT_t,
 \quad
 H_t=Z_t^\top Z_t=S_t^\top S_t+N_t^\top N_t+T_t^\top T_t.
 \label{eq:decomposition}
\end{equation}
Here $S_t=U^\top_+Z_t,N_t=U^\top_-Z_t\in\R^{r\times k}$ and
$T_t=W^\top Z_t\in\R^{(D-2r)\times k}$. When $D=2r$, the $T_t$ row is vacuous and all blocks involving $T_t$ are zero. This then convention covers the case in which $\ker(C^\star)$ is trivial. Let $Z_{t}^+=Z_t-\eta\nabla_Z\Phi(Z_t)$ denote the gradient descent update for the population objective. Projecting the population update from \cref{lem:lifting} gives the exact coefficient recursions for gradient descent:
\begin{align}
 S_t^+&=S_t-\frac\eta2(S_tH_t-\Lambda S_t),\label{eq:Spop}\\
 N_t^+&=N_t-\frac\eta2(N_tH_t+\Lambda N_t),\label{eq:Npop}\\
 T_t^+&=T_t-\frac\eta2T_tH_t.\label{eq:Tpop}
\end{align}
The three blocks $\{U_+,U_-,W\}$ describe the positive, negative, and null spectral coordinates. The analysis below shows that $S_tS_t^\top\to \Lambda$, $N_tN_t^\top\to 0$, and $T_tT_t^\top\to 0$.
To verify the projection, let
$\mathcal U=[U_+,U_-,W]$ and $Y_t=\mathcal U^\top Z_t=[S_t^\top,N_t^\top,T_t^\top]^\top$.
Orthogonality gives
\[
 \mathcal U^\top Z_tZ_t^\top Z_t=Y_t(Z_t^\top Z_t)=Y_tH_t,
 \quad
 \mathcal U^\top(M_+-M_-)Z_t=
 \begin{bmatrix}\Lambda S_t\\-\Lambda N_t\\0\end{bmatrix}.
\]
Applying $\mathcal U^\top$ to
$Z_t^+=Z_t-\frac\eta2(Z_tZ_t^\top-M_++M_-)Z_t$ and reading off the three block rows yields \eqref{eq:Spop}--\eqref{eq:Tpop}.

Define six Gram blocks
\begin{align}
 A_t&=S_tS_t^\top-\Lambda,&B_t&=S_tN_t^\top,&C_t&=S_tT_t^\top,\nonumber\\
 E_t&=N_tN_t^\top,&F_t&=N_tT_t^\top,&G_t&=T_tT_t^\top,
 \label{eq:blocks}
\end{align}
Thus $A_t,B_t,E_t\in\R^{r\times r}$,
$C_t,F_t\in\R^{r\times(D-2r)}$, and
$G_t\in\R^{(D-2r)\times(D-2r)}$. Replacing $S_t,N_t,T_t$ with $S^+_t,N^+_t,T^+_t$, we call the result $H_t^+$, $H\in\{A,B,C,E,F,G\}$. Define also
\begin{align}
 \dist_t&=\max\{\|A_t\|_2,\|B_t\|_2,\|C_t\|_2,
 \|E_t\|_2,\|F_t\|_2,\|G_t\|_2\},\label{eq:D}\\
 \dist_t^{\rm nz}&=\max\{\|A_t\|_2,\|B_t\|_2,\|C_t\|_2,
 \|E_t\|_2,\|F_t\|_2\}.
 \label{eq:Dnz}
\end{align}
In the orthogonal basis $\mathcal U=[U_+,U_-,W]$,
\begin{equation}
 \mathcal U^\top(Z_tZ_t^\top-M_+)\mathcal U
 =\begin{bmatrix}A_t&B_t&C_t\\B_t^\top&E_t&F_t\\C_t^\top&F_t^\top&G_t\end{bmatrix}.
 \label{eq:blockmatrix}
\end{equation}
The block-row norm bound and compression to the upper-right asymmetric block imply
\begin{equation}
 \|Z_tZ_t^\top-M_+\|_2\le3\dist_t,
 \quad
 \|L_tR_t^\top-X^\star\|_2\le3\dist_t.
 \label{eq:productbound}
\end{equation}
Indeed, write the block matrix in \eqref{eq:blockmatrix} as $\mathcal E_t$ and
let $x=(x_1,x_2,x_3)$ be conformably partitioned.  Since every block of
$\mathcal E_t$ has norm at most $\dist_t$,
\[
 \|[\mathcal E_tx]_i\|_2\le \dist_t\sum_{j=1}^3\|x_j\|_2,
 \quad
 \|\mathcal E_tx\|_2^2
 \le3\dist_t^2\big(\sum_{j=1}^3\|x_j\|_2\big)^2
 \le9\dist_t^2\|x\|_2^2.
\]
Thus $\|\mathcal E_t\|_2\le3\dist_t$, which which proves the first inequality in \eqref{eq:productbound}. The matrix $L_tR_t^\top-X^\star$ is the upper-right block of $Z_tZ_t^\top-M_+$, so compression proves the second inequality.

\begin{remark}
Note that $JU_+=U_-$, $JU_-=U_+$ and $JW=WJ_0$, where $J_0=W^\top JW$. Substituting \eqref{eq:decomposition} into $Z_t^\top JZ_t$ gives
\begin{equation}
L_t^\top L_t-R_t^\top R_t
=Z_t^\top JZ_t
=S_t^\top N_t+N_t^\top S_t+T_t^\top J_0T_t.
\label{eq:balance-coordinates}
\end{equation}
Thus both $N_t$ and  $T_t$ may contribute to the imbalance.
\end{remark}

\subsection{Blockwise population contraction}
\label{sec:population}

The spectral coordinates reduce the population update to matrix
recursion \eqref{eq:Spop}-\eqref{eq:Tpop}.  The next lemma presents the local estimate: the five blocks $\{A_t,B_t,C_t,E_t,F_t\}$ contract linearly up to quadratic forcing, while the null Gram block $G_t$ contracts only through a quadratic term.

\begin{lemma}
\label{lem:population}
There are positive absolute constants $c_{\rm loc},\ c_{\rm step},\ c_{\rm pop},\ C_{\rm pop}>0$. Here $c_{\rm loc},\ c_{\rm step}\le1$. If $\dist_t\le c_{\rm loc}\sigma_r$  and $ 0<\eta\le c_{\rm step}/\sigma_1$
then
\begin{align}
 \max_{H=A,B,C,E,F}\|H_t^+\|_2
 &\le(1-c_{\rm pop}\eta\sigma_r)\dist_t^{\rm nz}
 +C_{\rm pop}\eta\dist_t^2,\label{eq:popnz}\\
 \|G_t^+\|_2&\le\|G_t\|_2-c_{\rm pop}\eta\|G_t\|_2^2.
 \label{eq:popg}
\end{align}
\end{lemma}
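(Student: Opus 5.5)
Substitute the recursions \eqref{eq:Spop}--\eqref{eq:Tpop} directly into the definitions \eqref{eq:blocks} and expand each updated Gram block as a polynomial in $\eta$. Write $H_t=S_t^\top S_t+N_t^\top N_t+T_t^\top T_t$. The key algebraic observation is that every product $X H_t Y^\top$ with $X,Y\in\{S_t,N_t,T_t\}$ can be rewritten through the blocks themselves:
\[
S_tH_tS_t^\top=(A_t+\Lambda)^2+B_tB_t^\top+C_tC_t^\top ,\qquad
S_tH_tN_t^\top=(A_t+\Lambda)B_t+B_tE_t+C_tF_t^\top ,
\]
and so on. Each block thus obeys a closed recursion of the form
\[
H_t^+=H_t-\eta\,\mathcal L(H_t)+\eta\,\mathcal Q_t+\eta^2\,\mathcal R_t ,
\]
where $\mathcal L$ is linear with coefficients built from $\Lambda$, $\mathcal Q_t$ is bilinear in the blocks, and $\mathcal R_t$ collects the $\eta^2$ terms. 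The bilinear part satisfies $\|\mathcal Q_t\|_2\lesssim\dist_t^2$. Since $\|S_t\|_2^2\le\|\Lambda\|_2+\dist_t\lesssim\sigma_1$ and all other factors are controlled by $\sqrt{\dist_t}$ or $\sqrt{\sigma_1}$, the $\eta^2$ terms are at most $\eta^2\sigma_1(\sigma_1\dist_t^{\rm nz}+\dist_t^2)$. Under $\eta\le c_{\rm step}/\sigma_1$ this is absorbed into a small fraction of the linear contraction plus $C\eta\dist_t^2$.

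For the five non-null blocks, read off the linear parts:
\[
A^+\approx A-\tfrac\eta2(\Lambda A+A\Lambda),\qquad
B^+\approx B-\tfrac\eta2(\Lambda B+B\Lambda),\qquad
C^+\approx C-\tfrac\eta2\Lambda C,
\]
\[
E^+\approx E-\eta\Lambda E,\qquad
F^+\approx F-\tfrac\eta2\Lambda F .
\]
In each case the linear map is of the form $X\mapsto X-\eta(\alpha\Lambda X+\beta X\Lambda)$ with $\alpha+\beta\ge\tfrac12$ and $\alpha,\beta\ge0$.

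Some care is needed. In $B$, the $-\Lambda N$ term from \eqref{eq:Npop} yields $-\tfrac\eta2 S\,N^\top\Lambda$, while the $+\Lambda S$ term from \eqref{eq:Spop} yields $+\tfrac\eta2\Lambda B$. The terms $-\tfrac\eta2(S H N^\top+S H N^\top)$ then contribute $-\eta\Lambda B$ after extracting $S^\top S=\Lambda+\ldots$, which leaves a net $-\tfrac\eta2(\Lambda B+B\Lambda)$ up to quadratic terms. I will carry out these cancellations block by block.

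Since $\Lambda\succeq 2\sigma_r I$ and $\eta\|\Lambda\|_2\le 2c_{\rm step}<1$, the operator norm satisfies
\[
\|X-\eta(\alpha\Lambda X+\beta X\Lambda)\|_2\le(1-c\,\eta\sigma_r)\|X\|_2 .
\]
To prove this, factor the map as $(I-\eta\alpha\Lambda)X(I-\eta\beta\Lambda)$ modulo an $\eta^2\Lambda X\Lambda$ term, and note that one-sided factors such as $I-\tfrac\eta2\Lambda$ have norm at most $1-\eta\sigma_r$. Combining with the quadratic bounds gives \eqref{eq:popnz}, since the linear terms involve only $A,B,C,E,F$ and all nonlinear terms are bounded by $C\dist_t^2$.

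For $G$, the recursion is exact and clean:
\[
G^+=T\Bigl(I-\tfrac\eta2H\Bigr)^2T^\top=G-\eta\,THT^\top+\tfrac{\eta^2}4\,TH^2T^\top ,
\]
with
\[
THT^\top=C^\top C+F^\top F+G^2 .
\]
Because $\|H\|_2\lesssim\sigma_1$, we have $0\preceq I-\tfrac\eta2H\preceq I$. Hence
\[
G^+\preceq T\Bigl(I-\tfrac\eta2H\Bigr)T^\top=G-\tfrac\eta2(C^\top C+F^\top F+G^2)\preceq G-\tfrac\eta2G^2 ,
\]
and also $G^+\succeq0$. Since $G$ is positive semidefinite and $\eta\|G\|_2\le1$, the scalar map $x\mapsto x-\tfrac\eta2x^2$ is increasing on $[0,\|G\|_2]$. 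Applying it spectrally gives $\|G^+\|_2\le\|G\|_2-\tfrac\eta2\|G\|_2^2$, which is \eqref{eq:popg}.

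The main obstacle is the bookkeeping for $A^+$ and $B^+$. There the $+\Lambda S$ term in \eqref{eq:Spop} is \emph{expansive}, and contraction appears only after pairing it with the $-SH$ term via $S^\top S=\Lambda+\ldots$. One must verify that the leftover terms really are quadratic in $\dist_t$ (in particular, that no term linear in $G_t$ alone survives in the non-null blocks), and that the $\eta^2$ terms do not destroy the $\Lambda$-driven contraction. I would first write $S=(A+\Lambda)^{1/2}$-type identities only at the Gram level, avoiding any polar factors.
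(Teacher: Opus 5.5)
\textbf{There is a genuine gap in how you treat the $O(\eta^2)$ terms, and it costs a factor of $\kappa$.} You bound the second-order terms by $\eta^2\sigma_1(\sigma_1\dist_t^{\rm nz}+\dist_t^2)$ and propose to absorb the $\eta^2\sigma_1^2\dist_t^{\rm nz}$ part into the linear contraction. Similarly, you prove the two-sided contraction by writing $X-\tfrac\eta2(\Lambda X+X\Lambda)$ as $(I-\tfrac\eta2\Lambda)X(I-\tfrac\eta2\Lambda)$ ``modulo'' $\tfrac{\eta^2}{4}\Lambda X\Lambda$.

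The available contraction, however, is only $\eta\sigma_r\|X\|_2$. By contrast, $\|\tfrac{\eta^2}{4}\Lambda X\Lambda\|_2$ can be as large as $\eta^2\sigma_1^2\|X\|_2=(\eta\sigma_1)\,\kappa\,\eta\sigma_r\|X\|_2$. At $\eta=c_{\rm step}/\sigma_1$ this equals $c_{\rm step}\kappa\,\eta\sigma_r\|X\|_2$, which swamps the contraction once $\kappa\gtrsim 1/c_{\rm step}$. As written, your argument only works under $\eta\lesssim\sigma_r/\sigma_1^2$. The lemma instead requires $\eta\le c_{\rm step}/\sigma_1$ with absolute constants.

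\textbf{The missing ingredient is structural.} First, the second-order terms in the $A,B,C,F$ blocks are in fact $O(\eta^2\sigma_1\dist_t^2)$. The relevant identities are $S_tH_t-\Lambda S_t=A_tS_t+B_tN_t+C_tT_t$, $N_tH_t+\Lambda N_t=B_t^\top S_t+(E_t+\Lambda)N_t+F_tT_t$, and $T_tH_t=C_t^\top S_t+F_t^\top N_t+G_tT_t$. With these, every product that appears contains at least two small blocks and at most one factor of order $\sigma_1$. In the paper's bookkeeping of $\mathcal R_t\mathcal G_t\mathcal R_t$, the only large block of $\mathcal G_t$ is $K_{S,t}$ at position $(1,1)$, and the only large block of $\mathcal R_t$ is $E_t+\Lambda$ at $(2,2)$.

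The single exception is $\tfrac{\eta^2}{4}\Lambda E_t\Lambda$ in the $E$ block, and it must not be bounded separately. It combines with the first-order term into $P_\Lambda E_tP_\Lambda$, where $P_\Lambda=I-\tfrac\eta2\Lambda$, of norm at most $(1-\eta\sigma_r)^2\|E_t\|_2$. Note that the linear part of $E$ is $E_t-\tfrac\eta2(\Lambda E_t+E_t\Lambda)$, not $E_t-\eta\Lambda E_t$, which is not even symmetric. With the correct form, your factoring idea is exact for $E$.

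Second, for $A$ and $B$ no $\Lambda X\Lambda$ term is available to complete the product. Instead of factoring, use the identity $X-\tfrac\eta2(\Lambda X+X\Lambda)=\tfrac12[(I-\eta\Lambda)X+X(I-\eta\Lambda)]$. This gives $(1-2\eta\sigma_r)\|X\|_2$ with no second-order loss.

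With these two repairs your block-by-block expansion becomes essentially the paper's proof. Your treatment of $G$ is correct. Using $M^2\preceq M$ for $0\preceq M\preceq I$ yields the constant $1/2$ where the paper gets $3/4$, which is immaterial. Finally, in the $B$ computation the identity you need is $S_tS_t^\top=\Lambda+A_t$, not $S_t^\top S_t$.
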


\begin{proof}
Since $S_tS_t^\top=\Lambda+A_t$, we have
\begin{equation}
 (2-c_{\rm loc})\sigma_rI_r\preceq S_tS_t^\top
 \preceq(2\sigma_1+c_{\rm loc}\sigma_r)I_r.
 \label{eq:Sbounds}
\end{equation}
The two inequalities follow from
$-\|A_t\|_2I_r\preceq A_t\preceq\|A_t\|_2I_r$ and the bounds $2\sigma_r I_r\preceq\Lambda\preceq2\sigma_1 I_r$. Moreover,
$\|N_tN_t^\top\|_2=\|E_t\|_2$ and
$\|T_tT_t^\top\|_2=\|G_t\|_2$, so both are at most
$c_{\rm loc}\sigma_r$.  Since $H_t=S_t^\top S_t+N_t^\top N_t+T_t^\top T_t$
is positive semidefinite,
\[
 \|H_t\|_2\le\|S_tS_t^\top\|_2+
 \|N_tN_t^\top\|_2+\|T_tT_t^\top\|_2
 \le2\sigma_1+3c_{\rm loc}\sigma_r\le C_H\sigma_1
\]
for an absolute $C_H>2$.  Choose
\begin{equation}
 c_{\rm step}\le\min\{C_H^{-1},1/2\}.
 \label{eq:stepchoice}
\end{equation}
Then
\begin{equation}
 \eta\|H_t\|_2\le1,\quad I-\eta H_t/2\succeq0,\quad
 0\preceq I-\eta\Lambda\preceq I.
 \label{eq:psdsteps}
\end{equation}

%We now derive every leading term from a single Gram identity. 
Define
\begin{equation}
Y_t=\begin{bmatrix}S_t\\N_t\\T_t\end{bmatrix},\quad
 \mathcal G_t=Y_tY_t^\top=\begin{bmatrix}K_{S,t}&B_t&C_t\\B_t^\top&E_t&F_t\\C_t^\top&F_t^\top&G_t\end{bmatrix},
 \quad K_{S,t}=\Lambda+A_t,
 \label{eq:biggram}
\end{equation}
and
\begin{equation}
 \mathcal L=\diag(\Lambda,-\Lambda,0),\quad
 \mathcal R_t=\mathcal G_t-\mathcal L =\begin{bmatrix}A_t&B_t&C_t\\B_t^\top&E_t+\Lambda&F_t\\C_t^\top&F_t^\top&G_t\end{bmatrix}.
 \label{eq:residual}
\end{equation}
Equations \eqref{eq:Spop}--\eqref{eq:Tpop} are equivalent to
$Y_t^+=(I-\eta\mathcal R_t/2)Y_t$, and therefore
\begin{equation}
 \mathcal G_t^+=\mathcal G_t
 -\frac{\eta}{2}
 \bigl(\mathcal R_t\mathcal G_t+\mathcal G_t\mathcal R_t\bigr)
 +\frac{\eta^2}{4}\mathcal R_t\mathcal G_t\mathcal R_t.
 \label{eq:gramidentity}
\end{equation}
For transparency, direct block multiplication of the first-order term gives
\begin{align*}
 [\mathcal R_t\mathcal G_t+\mathcal G_t\mathcal R_t]_{11}
 &=A_tK_{S,t}+K_{S,t}A_t
 +2B_tB_t^\top+2C_tC_t^\top,\\
 [\mathcal R_t\mathcal G_t+\mathcal G_t\mathcal R_t]_{12}
 &=\Lambda B_t+B_t\Lambda
 +2(A_tB_t+B_tE_t+C_tF_t^\top),\\
 [\mathcal R_t\mathcal G_t+\mathcal G_t\mathcal R_t]_{13}
 &=\Lambda C_t+2(A_tC_t+B_tF_t+C_tG_t),\\
 [\mathcal R_t\mathcal G_t+\mathcal G_t\mathcal R_t]_{22}
 &=\Lambda E_t+E_t\Lambda
 +2(B_t^\top B_t+E_t^2+F_tF_t^\top),\\
 [\mathcal R_t\mathcal G_t+\mathcal G_t\mathcal R_t]_{23}
 &=\Lambda F_t
 +2(B_t^\top C_t+E_tF_t+F_tG_t).
\end{align*}
Substituting these identities into \eqref{eq:gramidentity} gives
\begin{align}
 A^+_{t}&=A_t-\tfrac{\eta}{2}
 \bigl(A_tK_{S,t}+K_{S,t}A_t\bigr)
 -\eta(B_tB_t^\top+C_tC_t^\top)
 +R_{A,t},
 \label{eq:Arec}\\
 B^+_{t}&=B_t-\tfrac{\eta}{2}(\Lambda B_t+B_t\Lambda)
 -\eta(A_tB_t+B_tE_t+C_tF_t^\top)
 +R_{B,t},
 \label{eq:Brec}\\
 C^+_{t}&= C_t-\tfrac{\eta}{2}\Lambda C_t
 -\eta(A_tC_t+B_tF_t+C_tG_t)
 +R_{C,t},
 \label{eq:Crec}\\
 E^+_{t}&=P_\Lambda E_tP_\Lambda
 -\eta(B_t^\top B_t+E_t^2+F_tF_t^\top)
 +R_{E,t},
 \label{eq:Erec}\\
 F^+_{t}&=F_t-\tfrac{\eta}{2}\Lambda F_t-\eta(B_t^\top C_t+E_tF_t+F_tG_t)
 +R_{F,t},
 \label{eq:Frec}
\end{align}
where $P_\Lambda=I_r-\eta\Lambda/2$ and
\begin{align}
R_{A,t}&=\tfrac{\eta^2}{4}
 [\mathcal R_t\mathcal G_t\mathcal R_t]_{11},
 &
 R_{B,t}&=
 \tfrac{\eta^2}{4}
 [\mathcal R_t\mathcal G_t\mathcal R_t]_{12},
 \nonumber\\
 R_{C,t}&=
 \tfrac{\eta^2}{4}
 [\mathcal R_t\mathcal G_t\mathcal R_t]_{13},
 &
 R_{E,t}&=
 \tfrac{\eta^2}{4}
 \left(
 [\mathcal R_t\mathcal G_t\mathcal R_t]_{22}
 -\Lambda E_t\Lambda
 \right),
 \nonumber\\
 R_{F,t}&=\tfrac{\eta^2}{4}
 [\mathcal R_t\mathcal G_t\mathcal R_t]_{23}.
 \label{eq:remainders}
\end{align}
The subtraction in $R_{E,t}$ removes the $\Lambda E_t\Lambda$ term already included in $P_\Lambda E_tP_\Lambda$.  The complete term-by-term expansions are recorded in \cref{app:remainders}.

Put $\rho=\dist_t$. % In $\mathcal G$, only $K_S$ can have norm of order $\sigma_1$; in $\mathcal R$, only $E+\Lambda$ can.  Apart from the extracted $\Lambda E\Lambda$ term, every product in \eqref{eq:remainders} contains at least two blocks from $\{A_t,B_t,C_t,E_t,F_t,G_t\}$. and at most one factor of order $\sigma_1$. Since $\rho\le c_{\rm loc}\sigma_r\le c_{\rm loc}\sigma_1$, 
From \cref{app:remainders}, there exists a constant $C$ such that
\begin{equation}
 \max_{H\in\{A,B,C,E,F\}}\|R_{H,t}\|_2
 \le C\eta^2\sigma_1\rho^2\le C\eta\rho^2.
 \label{eq:remainderbound}
\end{equation}

For every $X$ from $\{A_t,B_t,C_t,E_t,F_t,G_t\}$, we have
\begin{align}
 \left\|X-\tfrac\eta2(\Lambda X+X\Lambda)\right\|_2
 &\le(1-2\eta\sigma_r)\|X\|_2,\label{eq:twosided}\\
 \left\|X-\tfrac\eta2\Lambda X\right\|_2
 &\le(1-\eta\sigma_r)\|X\|_2,\label{eq:onesided}\\
 \|P_\Lambda XP_\Lambda\|_2&\le(1-\eta\sigma_r)^2\|X\|_2.
 \label{eq:sandwich}
\end{align}
The left side of \eqref{eq:twosided} equals one half of
$(I-\eta\Lambda)X+X(I-\eta\Lambda)$, and
$\|I-\eta\Lambda\|_2=1-2\eta\sigma_r$.  Likewise,
$X-(\eta/2)\Lambda X=P_\Lambda X$ and
$\|P_\Lambda\|_2=1-\eta\sigma_r$; applying this bound on both sides of
$P_\Lambda XP_\Lambda$ proves \eqref{eq:onesided}--\eqref{eq:sandwich}.
Substituting \(K_{S,t}=\Lambda+A_t\) into \eqref{eq:Arec} yields
\[
 A_{t}^+
 =
 A_t-\tfrac{\eta}{2}(A_t\Lambda+\Lambda A_t)
 -\eta(A_t^2+B_tB_t^\top+C_tC_t^\top)
 +R_{A,t}.
\]
We now bound the five recursions separately.  Write
$x=\dist_t^{\rm nz}$ and $\rho=\dist_t$.  Submultiplicativity of the operator
norm, \eqref{eq:twosided}--\eqref{eq:sandwich}, and
\eqref{eq:remainderbound} give
\begin{align}
\|A_{t}^+\|_2
 &\le
 (1-2\eta\sigma_r)\|A_t\|_2
 +\eta\bigl(
 \|A_t\|_2^2+\|B_t\|_2^2+\|C_t\|_2^2
 \bigr)
 \notag\\
 &\quad+C\eta\rho^2
 \le
 (1-2\eta\sigma_r)x+C\eta\rho^2,
 \label{eq:Abound}\\
 \|B_{t}^+\|_2
 &\le
 (1-2\eta\sigma_r)\|B_t\|_2
 +\eta\bigl(
 \|A_t\|_2\|B_t\|_2
 +\|B_t\|_2\|E_t\|_2
 +\|C_t\|_2\|F_t\|_2
 \bigr)
 \notag\\
 &\quad+C\eta\rho^2
 \le
 (1-2\eta\sigma_r)x+C\eta\rho^2,
 \label{eq:Bbound}\\
 \|C_{t}^+\|_2
 &\le
 (1-\eta\sigma_r)\|C_t\|_2
 +\eta\bigl(
 \|A_t\|_2\|C_t\|_2
 +\|B_t\|_2\|F_t\|_2
 +\|C_t\|_2\|G_t\|_2
 \bigr)
 \notag\\
 &\quad+C\eta\rho^2
 \le
 (1-\eta\sigma_r)x+C\eta\rho^2,
 \label{eq:Cbound}\\
 \|E_{t}^+\|_2
 &\le
 (1-\eta\sigma_r)^2\|E_t\|_2
 +\eta\bigl(
 \|B_t\|_2^2+\|E_t\|_2^2+\|F_t\|_2^2
 \bigr)
 \notag\\
 &\quad+C\eta\rho^2
 \le
 (1-\eta\sigma_r)x+C\eta\rho^2,
 \label{eq:Ebound}\\
 \|F_{t}^+\|_2
 &\le
 (1-\eta\sigma_r)\|F_t\|_2
 +\eta\bigl(
 \|B_t\|_2\|C_t\|_2
 +\|E_t\|_2\|F_t\|_2
 +\|F_t\|_2\|G_t\|_2
 \bigr)
 \notag\\
 &\quad+C\eta\rho^2
 \le
 (1-\eta\sigma_r)x+C\eta\rho^2.\label{eq:Fbound}
\end{align}
In \eqref{eq:Ebound} we used $(1-u)^2\le1-u$ for
$u=\eta\sigma_r\in[0,1]$.  Taking the maximum of
\eqref{eq:Abound}--\eqref{eq:Fbound} proves \eqref{eq:popnz}; any fixed $c_{\rm pop}\le 1$ and $C_{\rm pop}\ge \frac{23}{4}$ satisfy this bound.

Finally, \eqref{eq:Tpop} gives
\begin{align*}
 G_{t}^+=T_t(I-\eta H_t/2)^2T_t^\top=
 G_t-\eta T_tH_tT_t^\top
 +\frac{\eta^2}{4}T_tH_t^2T_t^\top.
\end{align*}
Because $0\preceq\eta H_t \preceq I$, spectral calculus gives
$H_t^2\preceq\|H_t\|_2H_t\preceq\eta^{-1}H_t$.  Therefore the last term is at most
$(\eta/4)T_tH_tT_t^\top$ in Loewner order.  Moreover,
\[
 T_tH_tT_t^\top=T_t\bigl(S_t^\top S_t+N_t^\top N_t+T_t^\top T_t\bigr)T_t^\top\succeq T_t(T_t^\top T_t)T_t^\top=G_t^2.
\]
It follows that
\begin{equation}
 0\preceq G_t^+\preceq G_t-\frac{3\eta}{4}G_t^2.
 \label{eq:Gloewner}
\end{equation}
Choose $c_{\rm pop}\le3/4$ so that $2c_{\rm pop}\eta\|G\|_2\le2c_{\rm pop}c_{\rm step}c_{\rm loc}\le1$. The scalar map
$q\mapsto q-c_{\rm pop}\eta q^2$ is increasing on
$[0,\|G_t\|_2]$.  Weyl monotonicity and functional calculus applied to
\eqref{eq:Gloewner} now yield
\[
 \|G_t^+\|_2\le\lambda_{\max}(G_t-c_{\rm pop}\eta G_t^2)
 =\|G_t\|_2-c_{\rm pop}\eta\|G_t\|_2^2,
\]
which is \eqref{eq:popg}.

\end{proof}

\subsection{A tight sublinear example}
\label{sec:sharp-example}

The following noiseless population example shows that for any local neighborhood of the true matrix, there exists a point in this neighborhood, such that the iterations started from this point exhibit a tight sublinear convergence rate.

\begin{proposition}
\label{prop:sharp-sublinear}
Let $d_1=d_2=k=2$, $r=1$, and
$X^\star=\operatorname{diag}(\sigma_\star,0)$ with $\sigma_\star>0$.  Apply gradient descent with step size $\eta>0$ to the population objective in \eqref{eq:lift}, initialized at
\begin{equation}
 L_0=R_0=\operatorname{diag}(\sqrt{\sigma_\star},\frac{\alpha_0}{\sqrt{2}}), \quad \alpha_0>0.
 \label{eq:sharp-init}
\end{equation}
Define $e_t=\|L_tR_t^\top-X^\star\|_2$.  If $0<\eta e_0\le1/2$, then, for every $t\ge0$,
\begin{equation}
 \frac{1}{e_0^{-1}+6\eta t}
 \le e_t\le
 \frac{1}{e_0^{-1}+2\eta t}.
 \label{eq:sharp-rate}
\end{equation}
Consequently, $e_t=\Theta(1/(\eta t))$.  %In particular, there are no constants $C<\infty$ and $\rho\in(0,1)$ for which $e_t\le C\rho^t$ for every $t$. The initialization can be chosen arbitrarily close to the lifted target, so this failure of a geometric rate occurs inside every local neighborhood of $M_+$.
\end{proposition}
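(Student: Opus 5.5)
The plan is to show that the trajectory stays in the invariant family of balanced diagonal factors $L_t=R_t=\operatorname{diag}(\sqrt{\sigma_\star},b_t)$. On that family the dynamics reduce to a scalar recursion for $e_t=b_t^2$, which we then compare with $1/(e_0^{-1}+c\eta t)$ by telescoping reciprocals.

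\textbf{Step 1: invariance.} By \cref{lem:lifting}, the population gradient in the original variables is
\[
\nabla_L\Phi=(LR^\top-X^\star)R+\tfrac12 L(L^\top L-R^\top R),\qquad
\nabla_R\Phi=(LR^\top-X^\star)^\top L-\tfrac12 R(L^\top L-R^\top R).
\]
Take $L=R=\operatorname{diag}(a,b)$ with $a=\sqrt{\sigma_\star}$. The balancing term vanishes, and $LR^\top-X^\star=\operatorname{diag}(a^2-\sigma_\star,b^2)=\operatorname{diag}(0,b^2)$. Hence both gradients equal $\operatorname{diag}(0,b^3)$. The update therefore preserves $L=R$, diagonality, and the first entry $a=\sqrt{\sigma_\star}$, while $b_{t+1}=b_t-\eta b_t^3=b_t(1-\eta b_t^2)$. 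Induction on $t$ gives the form $L_t=R_t=\operatorname{diag}(\sqrt{\sigma_\star},b_t)$ for all $t$, with $b_0=\alpha_0/\sqrt2$.

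\textbf{Step 2: scalar recursion.} Since $L_tR_t^\top-X^\star=\operatorname{diag}(0,b_t^2)$, we have $e_t=b_t^2$, and in particular $e_0=\alpha_0^2/2$. Squaring the recursion for $b_t$ gives
\[
e_{t+1}=e_t(1-\eta e_t)^2 .
\]
Put $u_t=\eta e_t$. If $0<u_t\le 1/2$, then $1-u_t\in[1/2,1)$, so $0<e_{t+1}<e_t$. Thus $u_{t+1}\le u_t\le 1/2$, and by induction from $u_0=\eta e_0\le1/2$ we get $0<u_t\le1/2$ for every $t$, with $e_t>0$ decreasing.

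\textbf{Step 3: reciprocal bounds.} From the recursion,
\[
\frac1{e_{t+1}}-\frac1{e_t}=\frac1{e_t}\Bigl((1-u_t)^{-2}-1\Bigr).
\]
We need two elementary inequalities for $f(u)=(1-u)^{-2}$ on $[0,1/2]$:
\[
1+2u\le f(u)\le 1+6u .
\]
The lower bound is the tangent line at $u=0$, valid because $f$ is convex. The upper bound is the chord from $(0,1)$ to $(1/2,4)$, also valid by convexity. Multiplying through by $1/e_t$ and using $u_t=\eta e_t$ gives
\[
2\eta\le \frac1{e_{t+1}}-\frac1{e_t}\le 6\eta .
\]
Telescoping from $0$ to $t$ yields $e_0^{-1}+2\eta t\le e_t^{-1}\le e_0^{-1}+6\eta t$, which is exactly \eqref{eq:sharp-rate}. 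Since $e_0^{-1}$ is fixed, both bounds are of order $1/(\eta t)$ for large $t$, so $e_t=\Theta(1/(\eta t))$.

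The only genuinely delicate point is Step 1: checking that the balancing term produces no coupling between the two diagonal entries and that the first entry is exactly stationary. This relies on the initialization being exactly balanced and exactly correct in the first coordinate. The remaining steps are a one-dimensional computation.

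Finally, $Z_0Z_0^\top-M_+$ has operator norm of order $\alpha_0^2$. Letting $\alpha_0\to0$ places the initialization arbitrarily close to the target while the rate remains $\Theta(1/t)$, which is the claimed failure of a geometric rate inside every neighborhood.
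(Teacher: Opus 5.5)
Your proof is correct and follows the paper's argument. The balanced diagonal family is invariant, which reduces the dynamics to $e_{t+1}=e_t(1-\eta e_t)^2$; the reciprocal increment $\eta(2-u_t)/(1-u_t)^2$ with $u_t=\eta e_t\in(0,1/2]$ then lies in $[2\eta,6\eta]$ (your tangent/chord convexity argument justifies this explicitly), and telescoping, over $j=0,\dots,t-1$ rather than $0$ to $t$ as you wrote, gives \eqref{eq:sharp-rate}.
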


\begin{proof}
We prove by induction that each iteration has the following form:
\begin{equation}
 L_j=R_j=\operatorname{diag}(\sqrt{\sigma_\star},\frac{\alpha_j}{\sqrt{2}}).
 \label{eq:sharp-form}
\end{equation}
The claim holds at $j=0$. Suppose that it holds at $j=t$. Then $L_t^\top L_t=R_t^\top R_t$, so the balancing penalty and both of its
gradients vanish.  Moreover,
\begin{equation}
 L_tR_t^\top-X^\star
 =\frac{\alpha_t^2}{2}\operatorname{diag}(0,1).
 \label{eq:sharp-error}
\end{equation}
%For the population least-squares term, the two factor gradients are $(L_tR_t^\top-X^\star)R_t$ and $(L_tR_t^\top-X^\star)^\top L_t$, respectively.  Their first columns are zero, whereas their second columns both equal $[0;\frac{\alpha_t^3}{2\sqrt2}]$. 
By direct computation, one gradient step preserves \eqref{eq:sharp-form} and gives the exact scalar recursion
\begin{equation}
 \alpha_{t+1}=\alpha_t\big(1-\frac{\eta\alpha_t^2}{2}\big).
 \label{eq:sharp-alpha}
\end{equation}
By \eqref{eq:sharp-error}, $e_t=\alpha_t^2/2$, and hence
\begin{equation}
 e_{t+1}=e_t(1-\eta e_t)^2.
 \label{eq:sharp-scalar}
\end{equation}
Because $0<\eta e_0\le1/2$, this recursion shows inductively that
$0<e_{t+1}<e_t$ and $0<\eta e_t\le1/2$ for all $t$.  Taking reciprocals in
\eqref{eq:sharp-scalar} yields
\begin{equation}
 \frac1{e_{t+1}}-\frac1{e_t}
 =\eta\frac{2-\eta e_t}{(1-\eta e_t)^2}.
 \label{eq:sharp-reciprocal}
\end{equation}
For $x\in[0,1/2]$, we have $2\le\frac{2-x}{(1-x)^2}\le6$.
Applying with $x=\eta e_t$ and summing
\eqref{eq:sharp-reciprocal} from $0$ to $t-1$ gives
\[
 e_0^{-1}+2\eta t\le e_t^{-1}
 \le e_0^{-1}+6\eta t.
\]
Inverting the positive quantities proves \eqref{eq:sharp-rate}.

%The lower bound in \eqref{eq:sharp-rate} rules out $e_t\le C\rho^t$. Note that $e_{t+1}/e_t=(1-\eta e_t)^2\to1$, which is incompatible with an asymptotic $Q$-linear contraction factor strictly below one.

%For completeness, stack the factors and put $w=2^{-1/2}[0,1,0,1]^\top$.  The first column of $Z_0$ generates $M_+$ exactly, while the second is $\alpha_0w$.  Therefore \[Z_0Z_0^\top-M_+=\alpha_0^2ww^\top,\quad\|Z_0Z_0^\top-M_+\|_2=\alpha_0^2=2e_0. \] Taking $\alpha_0$ arbitrarily small places the trajectory inside any prescribed local basin. The redundant vector $w\in\ker(C^\star)$ and is orthogonal in factor coordinates to the signal column.%, so the construction is precisely an independent null-block trajectory.
\end{proof}

\section{Empirical perturbations and the master recursion}
\label{sec:concentration}

The proof now passes from deterministic geometry to finite-sample dynamics. We first isolate the empirical-gradient error and establish a uniform bound. We then project that perturbation into the same spectral coordinates and combine it with the population contraction.

\subsection{Uniform empirical-gradient control}

Define
\begin{equation}
\cA_n^*\cA_n(M)=\frac1n\sum_{i=1}^n\ip{A_i}{M}A_i,\quad W_n=\frac1n\sum_{i=1}^n\epsilon_iA_i.
\label{eq:operator}
\end{equation}
Frobenius isotropy in \eqref{eq:sensing-assumption} implies
$\E\cA_n^*\cA_n(M)=M$.  With
$\err_t=L_tR_t^\top-X^\star$, let
\begin{equation}
 \Delta_t=(\cA_n^*\cA_n-I)(\err_t)-W_n,\quad \widetilde\Delta_t=\begin{bmatrix}0&\Delta_t\\\Delta_t^\top&0\end{bmatrix}.
\label{eq:Delta}
\end{equation}
Then $\|\widetilde\Delta_t\|_2=\|\Delta_t\|_2$, and direct differentiation of \eqref{eq:objective} gives the exact empirical update
\begin{equation}
 Z_{t+1}=Z_t-\eta\left\{\frac12(Z_tZ_t^\top-M_++M_-)Z_t
 +\widetilde\Delta_tZ_t\right\}.
 \label{eq:empiricalupdate}
\end{equation}

We next prove the uniform event that controls $\Delta_t$ at every adaptive iterate.

The following proposition records the standard sub-Gaussian
rank-restricted isometry estimate in the normalization used throughout this paper. 
\begin{proposition}[Sub-Gaussian matrix RIP]
\label{prop:dirksen-rip}
There is an absolute constant $C_{\rm D}>0$ such that, for every integer
$1\le q\le\min\{d_1,d_2\}$ and every $0<\delta,\zeta<1$,
\begin{equation}
 n\ge C_{\rm D}K_A^4\delta^{-2}
 \max\{q(D+1),\log(\zeta^{-1})\}
 \label{eq:dirksen-samples}
\end{equation}
implies, with probability at least $1-\zeta$,
\begin{equation}
\delta_q^{\rm RIP}(\cA_n):=\sup_{\substack{\rank(X)\le q\\\|X\|_F=1}}\left|\|\cA_n(X)\|_2^2-1\right|\le\delta,\quad \cA_n(X):=\frac1{\sqrt n}(\ip{A_i}{X})_{i=1}^n.
\label{eq:dirksen-rip}
\end{equation}
\end{proposition}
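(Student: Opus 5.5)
This is the standard sub-Gaussian RIP bound for rank-$q$ matrices (Candès–Plan, Dirksen). I would prove it by combining a fixed-matrix concentration inequality with a net over the low-rank unit sphere, then pass from the net to the whole set using the rank-$2q$ structure.

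\textbf{Pointwise concentration.} Fix $X$ with $\|X\|_F=1$. By \eqref{eq:sensing-assumption}, each $\ip{A_i}{X}$ is sub-Gaussian with $\|\ip{A_i}{X}\|_{\psi_2}\le K_A$ and satisfies $\E\ip{A_i}{X}^2=\ip{\E[\ip{A_i}{X}A_i]}{X}=1$. Hence $\ip{A_i}{X}^2-1$ is centered and sub-exponential with $\psi_1$-norm at most $CK_A^2$. Bernstein's inequality then gives
\[
\Prob\bigl(\bigl|\|\cA_n(X)\|_2^2-1\bigr|>u\bigr)\le2\exp\bigl(-cn\min\{u^2/K_A^4,u/K_A^2\}\bigr).
\]
For $u=\delta/2<1$ and $K_A\ge1$, the minimum is $u^2/K_A^4$.

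\textbf{Net and union bound.} Let $S_q=\{X:\rank(X)\le q,\ \|X\|_F=1\}$. The Candès–Plan covering lemma gives an $\epsilon$-net $\mathcal N$ of $S_q$ in Frobenius norm with $|\mathcal N|\le(9/\epsilon)^{q(d_1+d_2+1)}=(9/\epsilon)^{q(D+1)}$. It is obtained by netting $U\Sigma V^\top$ factor by factor. A union bound over $\mathcal N$ with a fixed small $\epsilon$ (say $\epsilon=1/16$) makes the deviation at most $\delta/2$ on all of $\mathcal N$, with failure probability
\[
2\exp\bigl(q(D+1)\log(9/\epsilon)-cn\delta^2/(4K_A^4)\bigr).
\]
This is at most $\zeta$ whenever $n\ge C_{\rm D}K_A^4\delta^{-2}\max\{q(D+1),\log\zeta^{-1}\}$, for $C_{\rm D}$ large enough. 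Note that $q(D+1)+\log\zeta^{-1}\le 2\max\{\cdot\}$.

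\textbf{Extension from the net (main obstacle).} The difference $X-X_0$ between a point and its net neighbour has rank up to $2q$, so it does not lie in $S_q$ and a direct triangle inequality does not close. I would work with the operator norm $\|\cA_n\|$ restricted to rank-$q$ matrices. Set
\[
\delta^\ast=\sup_{S_q}\bigl|\|\cA_n X\|_2-1\bigr|.
\]
Split $X-X_0$ as a sum of two rank-$\le q$ matrices $Y_1,Y_2$ that are Frobenius-orthogonal (take the SVD and separate the singular values). Then $\|Y_1\|_F+\|Y_2\|_F\le\sqrt2\,\epsilon$, so
\[
\|\cA_n(X-X_0)\|_2\le\sqrt2\,\epsilon(1+\delta^\ast).
\]
Using the net bound at $X_0$, this gives $1+\delta^\ast\le1+\delta/4+\sqrt2\,\epsilon(1+\delta^\ast)$, and similarly for the lower deviation. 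Solving for $\delta^\ast$ yields $\delta^\ast\lesssim\delta/4+2\epsilon$, and squaring converts this into the squared-norm deviation.

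Because $\epsilon$ is a fixed constant, the $2\epsilon$ term is not proportional to $\delta$, so this naive version needs care. I would fix it by taking $\epsilon=c\delta$. This only multiplies the exponent by $\log(9/(c\delta))$, which does not match the stated $\delta^{-2}$ scaling. The alternative is the standard route: either bound the squared form directly via polarization on the net and an iterative self-bounding of $\sup_{S_q}|\ip{(\cA_n^*\cA_n-I)X}{X}|$, or invoke Dirksen's generic-chaining bound $\delta\lesssim K_A^2(\gamma_2/\sqrt n+\gamma_2^2/n)$ with $\gamma_2(S_q)\lesssim\sqrt{qD}$. The latter gives exactly \eqref{eq:dirksen-samples}, and I would present the net argument but cite Dirksen for the sharp constant dependence.
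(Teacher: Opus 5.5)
In the end your proof of the statement as written coincides with the paper's: both rest on Dirksen's generic-chaining RIP theorem for the unit-Frobenius rank-$q$ class. The paper only checks that each coordinate $\langle A_i,X-Y\rangle/\sqrt n$ has $\psi_2$-norm at most $K_A\|X-Y\|_F/\sqrt n$, so that Dirksen's parameter is $\alpha=K_A^2$ and $K_A^4=\alpha^2$, and then cites his rank-RIP example. You also correctly diagnose that the single-scale Cand\`es--Plan net argument alone gives only $n\gtrsim\delta^{-2}\log(1/\delta)\,q(D+1)$; the convergent entropy integral $\int_0^2\sqrt{q(D+1)\log(9/\epsilon)}\,d\epsilon\lesssim\sqrt{qD}$ is what removes the logarithm.

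The one slip is in where the $\log(\zeta^{-1})$ term in \eqref{eq:dirksen-samples} comes from. It comes from the high-probability (tail) form of Dirksen's theorem, with Frobenius radius $1$. The expectation-type bound $K_A^2(\gamma_2/\sqrt n+\gamma_2^2/n)$ that you display cannot produce it.
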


\begin{proof}
%Vectorization identifies $\R^{d_1\times d_2}$, equipped with the Frobenius norm, with a Euclidean Hilbert space.  

By \eqref{eq:sensing-assumption}, each coordinate of $\cA_n$ satisfies
\[
 \|[\cA_n(X)-\cA_n(Y)]_i\|_{\psi_2}
 \le \frac{K_A}{\sqrt n}\|X-Y\|_F.
\]
Thus the parameter $\alpha$ in Dirksen's definition of a sub-Gaussian map is
$\alpha=K_A^2$.  Dirksen's rank-RIP result
\cite[Example~5.7]{dirksen2016}, obtained from its master Theorem~4.8 and the
covering estimate for the unit-Frobenius rank-$q$ class,
then gives \eqref{eq:dirksen-samples}--\eqref{eq:dirksen-rip}.  In particular,
the fourth power $K_A^4$ is the quantity $\alpha^2$ in that theorem.
\end{proof}

The next lemma shows the probabilistic and geometric tools used in the perturbation analysis.
\begin{lemma}
\label{lem:tools}
There is a fixed absolute constant $C_{\rm B}\ge1$ with the following
properties.
\begin{enumerate}[label=(\roman*)]
 \item The unit sphere in $\R^d$ has an $\eps$-net of cardinality at most
 $(1+2/\eps)^d$.
 \item If $\mathcal N_1,\mathcal N_2$ are $\eps$-nets of the two unit
 spheres and $\eps<1/2$, then
 \[
  \|B\|_2\le(1-2\eps)^{-1}
  \max_{u\in\mathcal N_1,v\in\mathcal N_2}|u^\top Bv|.
 \]
 \item If the independent centered variables $X_i$ obey
 $\|X_i\|_{\psi_1}\le K$, then
 \[
  \Prob\!\left(\left|\frac1n\sum_iX_i\right|>
  C_{\rm B}K\left(\sqrt{\frac xn}+\frac xn\right)\right)\le2e^{-x}.
 \]
 \item If $U,V$ are sub-Gaussian, then $UV$ is sub-exponential, and
 $$\|UV\|_{\psi_1}\le\|U\|_{\psi_2}\|V\|_{\psi_2}.$$
\end{enumerate}
\end{lemma}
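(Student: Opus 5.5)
These four items are standard facts, and I would prove each directly rather than invoke deeper machinery; $C_{\rm B}$ will be the constant produced in (iii).

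For (i) I would use the volumetric argument. Take a maximal $\eps$-separated subset $\mathcal N$ of $\mathbb S^{d-1}$; by maximality it is an $\eps$-net. The balls of radius $\eps/2$ centred at points of $\mathcal N$ are disjoint and lie inside the ball of radius $1+\eps/2$. Comparing volumes gives $|\mathcal N|(\eps/2)^d\le(1+\eps/2)^d$, which is exactly $(1+2/\eps)^d$.

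For (ii) I would fix unit vectors $u,v$ attaining $\|B\|_2=u^\top Bv$ and pick $u_0\in\mathcal N_1$, $v_0\in\mathcal N_2$ within distance $\eps$ of them. Writing $u^\top Bv=u_0^\top Bv_0+(u-u_0)^\top Bv+u_0^\top B(v-v_0)$ bounds the last two terms by $2\eps\|B\|_2$. Hence $(1-2\eps)\|B\|_2\le\max|u_0^\top Bv_0|$, and the condition $\eps<1/2$ lets me divide.

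For (iv) I would apply Young's inequality $|UV|\le\frac12(U^2/a^2+V^2/b^2)\,ab$ with $a=\|U\|_{\psi_2}$ and $b=\|V\|_{\psi_2}$ (the degenerate case where either is zero is trivial). Then Cauchy--Schwarz gives
\[
\E\exp(|UV|/(ab))\le\E\bigl[\exp(U^2/a^2)^{1/2}\exp(V^2/b^2)^{1/2}\bigr]\le\bigl(\E e^{U^2/a^2}\,\E e^{V^2/b^2}\bigr)^{1/2}\le2 .
\]
This is the claimed bound with constant exactly one, matching the definition \eqref{eq:orlicz-definition}.

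The only step that needs real care is (iii), which is Bernstein's inequality for sub-exponential variables. I would follow \cite[Thm.~2.8.1]{vershynin2018}. From \eqref{eq:orlicz-characterizations} and centering, the moment growth gives $\E e^{\lambda X_i}\le e^{C\lambda^2K^2}$ for $|\lambda|\le c/K$. Independence then lets the MGF bound multiply across $i$. A Chernoff bound optimised over $|\lambda|\le c/K$ yields
\[
\Prob\Bigl(\Bigl|\tfrac1n\textstyle\sum X_i\Bigr|>s\Bigr)\le2\exp\bigl(-c\,n\min\{s^2/K^2,s/K\}\bigr).
\]
I would then set $s=C_{\rm B}K(\sqrt{x/n}+x/n)$ and check that $n\min\{s^2/K^2,s/K\}\ge x$ once $C_{\rm B}\ge\max\{1,c^{-1/2},c^{-1}\}$. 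If $s\le K$ the minimum is $s^2/K^2\ge C_{\rm B}^2x/n$; otherwise it is $s/K\ge C_{\rm B}x/n$. In either case this fixes $C_{\rm B}$ as an absolute constant. The main obstacle is purely bookkeeping: keeping the constants absolute, which works because $\alpha\in\{1,2\}$ is fixed.
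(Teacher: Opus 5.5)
Your proposal is correct and matches the paper: (i) and (ii) are the same volumetric and net-approximation arguments, and for (iii) and (iv) the paper simply cites Vershynin's Bernstein inequality and product-of-sub-Gaussians lemma, whose standard proofs you reproduce (an MGF bound plus Chernoff, and Young's inequality plus Cauchy--Schwarz). One small tidy-up in (iii): the inequality you actually need is $c\,n\min\{s^2/K^2,s/K\}\ge x$, not $n\min\{s^2/K^2,s/K\}\ge x$, and your choice $C_{\rm B}\ge\max\{1,c^{-1/2},c^{-1}\}$ does deliver it.
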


\begin{proof}
For (i), take a maximal $\eps$-separated subset $\mathcal N$ of the unit
sphere.  Maximality makes $\mathcal N$ an $\eps$-net.  The Euclidean balls
$B(x,\eps/2)$, $x\in\mathcal N$, are disjoint and contained in
$B(0,1+\eps/2)$.  Comparing volumes gives
\[
 |\mathcal N|(\eps/2)^d\operatorname{vol}B(0,1)
 \le(1+\eps/2)^d\operatorname{vol}B(0,1),
\]
and hence $|\mathcal N|\le(1+2/\eps)^d$.

For (ii), choose $u_0,v_0$ in the respective nets with
$\|u-u_0\|_2,\|v-v_0\|_2\le\eps$.  Then
\begin{align*}
 |u^\top Bv|
 &\le |u_0^\top Bv_0|+|(u-u_0)^\top Bv|+|u_0^\top B(v-v_0)|\\
 &\le |u_0^\top Bv_0|+2\eps\|B\|_2.
\end{align*}
Taking the supremum over unit $u,v$ and moving the final term to the left
proves the claim.

Part (iii) follows from \cite[Theorem~2.9.1]{vershynin2018}. For (iv), see \cite[Lemma~2.8.6]{vershynin2018}.
\end{proof}

We now combine \cref{prop:dirksen-rip} with the tools above to construct the uniform concentration event used in the remainder of the proof in this section.  
\begin{lemma}[Uniform low-rank concentration]
\label{lem:concentration}
There are fixed constants $C_{\rm samp}\ge1$, $C_{\rm op}>0$, and
$C_{\rm noise}>0$ such that, if $n\ge C_{\rm samp}kD$, then with
probability at least $1-D^{-1}$, simultaneously for every radius $\rho>0$,
\begin{align}
 \sup_{\substack{\rank(M)\le2k\\\|M\|_2\le \rho}}
 \|(\cA_n^*\cA_n-I)(M)\|_2&\le C_{\rm op}a_n\rho,
 &a_n&=k\sqrt{\frac Dn},\label{eq:uniform}\\
 \|W_n\|_2&\le C_{\rm noise}b_n,&b_n&=\sigma\sqrt{\frac Dn}.
 \label{eq:noise}
\end{align}
Here $C_{\rm samp}$ and $C_{\rm op}$ depend only on $K_A$, and
$C_{\rm noise}$ depends only on $K_A$ and $K_\epsilon$.
\end{lemma}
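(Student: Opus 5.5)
We prove the two bounds separately and combine them by a union bound.

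\textbf{Uniform operator bound \eqref{eq:uniform}.} Reduce to a Frobenius statement and apply \cref{prop:dirksen-rip} with rank $q=4k$. For rank-$\le2k$ matrices $M,M'$ with unit Frobenius norm, the polarization identity
\[
 \ip{(\cA_n^*\cA_n-I)M}{M'}=\tfrac14\bigl(\|\cA_n(M+M')\|_2^2-\|M+M'\|_F^2-\|\cA_n(M-M')\|_2^2+\|M-M'\|_F^2\bigr)
\]
involves only matrices of rank $\le4k$. Hence $\delta_{4k}^{\rm RIP}\le\delta$ gives $|\ip{(\cA_n^*\cA_n-I)M}{M'}|\le 2\delta$ whenever $\|M\|_F=\|M'\|_F=1$. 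To bound the operator norm, test against rank-one $M'=uv^\top$ with unit $u,v$, which has rank $1\le2k$. This yields $\|(\cA_n^*\cA_n-I)(M)\|_2\le2\delta\|M\|_F$ for every $M$ of rank at most $2k$.

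If $\rank(M)\le2k$ and $\|M\|_2\le\rho$, then $\|M\|_F\le\sqrt{2k}\rho$. We therefore need $\delta\sqrt{2k}\lesssim k\sqrt{D/n}$, that is, $\delta\asymp\sqrt{kD/n}$. With $q=4k$, condition \eqref{eq:dirksen-samples} reads $n\ge C_{\rm D}K_A^4\delta^{-2}\max\{4k(D+1),\log\zeta^{-1}\}$. Choosing $\delta=c\sqrt{kD/n}$ with $c^2\asymp C_{\rm D}K_A^4$ satisfies it when $\zeta=D^{-1}/2$, since $\log(2D)\le kD$. The requirement $\delta<1$ is ensured by $n\ge C_{\rm samp}kD$. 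The resulting bound is $2\sqrt2\,c\sqrt{k}\sqrt{kD/n}\,\rho=C_{\rm op}a_n\rho$. This holds for all $\rho$ simultaneously because the statement is homogeneous in $\rho$. Rank cap: if $4k>\min\{d_1,d_2\}$, use $q=\min\{d_1,d_2\}$, which covers all matrices, and the same estimate holds with $D$ in place of $kD$, which is even better.

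\textbf{Noise bound \eqref{eq:noise}.} Use a net argument with \cref{lem:tools}. Take $1/4$-nets $\mathcal N_1,\mathcal N_2$ of the unit spheres in $\R^{d_1},\R^{d_2}$. Their total cardinality is at most $9^{D}$. For fixed $u,v$, the summand $\epsilon_i\,u^\top A_iv=\epsilon_i\ip{A_i}{uv^\top}$ is centered by independence. By (iv) and \eqref{eq:sensing-assumption}, \eqref{eq:noise-assumption}, it is sub-exponential with $\psi_1$-norm at most $K_AK_\epsilon\sigma$. Apply (iii) with $x=D\log 9+\log(4D)\lesssim D$ and union bound over the net. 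With probability at least $1-1/(2D)$ every pair satisfies $|u^\top W_nv|\lesssim K_AK_\epsilon\sigma(\sqrt{D/n}+D/n)\lesssim\sigma\sqrt{D/n}$, using $n\ge D$. Part (ii) with $\eps=1/4$ then gives $\|W_n\|_2\le2\max|u^\top W_nv|\le C_{\rm noise}b_n$. If $\sigma=0$, the bound is trivial.

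\textbf{Main obstacle.} The main obstacle is bookkeeping in the operator bound: keeping exactly the scaling $a_n=k\sqrt{D/n}$ (one $\sqrt k$ from the RIP level, one from $\|M\|_F\le\sqrt{2k}\|M\|_2$) and verifying that the rank-$4k$ RIP sample requirement is absorbed into $n\ge C_{\rm samp}kD$ with failure probability $\le D^{-1}/2$.
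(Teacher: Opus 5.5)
Your proof is correct and follows essentially the same route as the paper: Dirksen's RIP combined with polarization against rank-one test matrices $uv^\top$, the conversion $\|M\|_F\le\sqrt{2k}\,\|M\|_2$ that produces the $k\sqrt{D/n}$ scaling, and a $1/4$-net plus Bernstein union bound for $W_n$, with each event failing with probability at most $(2D)^{-1}$. The differences are cosmetic. The paper invokes RIP only at rank $\min\{2k+1,d_1,d_2\}$, which is enough because the test matrix is rank one, and it rescales $M,N$ to equal Frobenius norm before polarizing; your rank-$4k$ version works equally well. In the capped case the requirement becomes $\min\{d_1,d_2\}(D+1)\le 4k(D+1)$ rather than ``$D$ in place of $kD$,'' and this still suffices.
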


\begin{proof}
Set $s=2k$ and $q=\min\{s+1,d_1,d_2\}$.  Let $C_{\rm D}$ be the absolute constant in
\cref{prop:dirksen-rip}, and fix once and for all
\begin{equation}
 C_{\rm rip}:=\max\{1,\sqrt{C_{\rm D}}\},\quad
 C_{\rm samp}:=\max\{1,72C_{\rm rip}^2K_A^4\},\quad
 C_{\rm op}:=3C_{\rm rip}K_A^2.
 \label{eq:ripconstants}
\end{equation}
Because $s\ge2$ and $D\ge2$, we have
$q\le3s/2$, $D+1\le3D/2$, and
$q(D+1)\ge\log(2D)$.  Under $n\ge C_{\rm samp}kD$, the deterministic
threshold
\begin{equation}
 \bar\delta_q:=C_{\rm rip}K_A^2
 \sqrt{\frac{q(D+1)}n}
 \label{eq:ripthreshold}
\end{equation}
satisfies $0<\bar\delta_q\le1/4$.  Applying
\cref{prop:dirksen-rip} with $\delta=\bar\delta_q$ and
$\zeta=(2D)^{-1}$ gives
\begin{equation}
 \delta_q^{\rm RIP}(\cA_n)\le\bar\delta_q
 \label{eq:ripevent}
\end{equation}
with failure probability at most $(2D)^{-1}$.

Write $\mathcal T=\cA_n^*\cA_n-I$ and $\mathcal B(M,N)=\ip{\mathcal T(M)}{N}$, then $\mathcal B$ is a symmetric bilinear form.  On \eqref{eq:ripevent},
\begin{equation}
 |\mathcal B(X,X)|\le\bar\delta_q\|X\|_F^2
 \quad\text{whenever }\rank(X)\le q.
 \label{eq:ripquadratic}
\end{equation}
Fix a nonzero matrix $M$ with $\rank(M)\le s$ and a rank-one matrix
$N=uv^\top$ with $\|u\|_2=\|v\|_2=1$.  Set
\[
 a=\sqrt{\frac{\|N\|_F}{\|M\|_F}},\quad
 \widehat M=aM,\quad \widehat N=a^{-1}N.
\]
Then $\|\widehat M\|_F=\|\widehat N\|_F
=\sqrt{\|M\|_F\|N\|_F}$ and
$\mathcal B(\widehat M,\widehat N)=\mathcal B(M,N)$.  Since
$\rank(\widehat M\pm\widehat N)\le
\min\{s+1,d_1,d_2\}=q$, polarization and
\eqref{eq:ripquadratic} give
\begin{align*}
 4|\mathcal B(M,N)|
 &\le |\mathcal B(\widehat M+\widehat N,\widehat M+\widehat N)|
+|\mathcal B(\widehat M-\widehat N, \widehat M-\widehat N)|\\ &\le\bar\delta_q\bigl(\|\widehat M+\widehat N\|_F^2 +\|\widehat M-\widehat N\|_F^2\bigr)=4\bar\delta_q\|M\|_F\|N\|_F.
\end{align*}
The same conclusion is trivial if $M=0$.  Since $\|uv^\top\|_F=1$, we
have proved
\begin{equation}
 |\ip{\mathcal T(M)}{uv^\top}|
 \le\bar\delta_q\|M\|_F
 \quad\text{for }\rank(M)\le s, ~\|u\|_2=\|v\|_2=1.
 \label{eq:restricted-orthogonality}
\end{equation}

Maximizing \eqref{eq:restricted-orthogonality} over unit vectors $u,v$
yields
\begin{align}
 \|\mathcal T(M)\|_2
 &=\sup_{\|u\|_2=\|v\|_2=1}
   |\ip{\mathcal T(M)}{uv^\top}|\notag\\
 &\le\bar\delta_q\|M\|_F
 \le\sqrt{s}\,\bar\delta_q\|M\|_2
 \le C_{\rm op}k\sqrt{\frac Dn}\|M\|_2.
 \label{eq:rip-to-operator}
\end{align}
The final inequality uses $q\le3s/2$, $D+1\le3D/2$, and $s=2k$.
Taking the supremum over $\|M\|_2\le \rho$ proves \eqref{eq:uniform}
simultaneously for every $\rho>0$ on the single RIP event.

For the noise term, fix the additional constants
\begin{equation}
 K_W:=K_AK_\epsilon,\quad
 H_W:=\log9,\quad \gamma_W:=H_W+2,
 \quad
 C_{\rm noise}:=2C_{\rm B}K_W(\sqrt{\gamma_W}+\gamma_W).
 \label{eq:noiseconstants}
\end{equation}
For fixed unit vectors $u,v$, independence of $\epsilon_i$ and $A_i$, together
with centering, gives
\[
 \E[\epsilon_i u^\top A_iv]
 =\E\epsilon_i\,\E[u^\top A_iv]=0.
\]
The Orlicz product bound gives
$\|\epsilon_i u^\top A_iv\|_{\psi_1}\le K_W\sigma$.  Take
$1/4$-nets of the two spheres.  Their joint log-cardinality is at most
$H_WD$.  Bernstein with $x=\gamma_WD$, followed by the union bound and
\cref{lem:tools} (iii), gives failure probability at most $2e^{-2D}$ and
\[
 \|W_n\|_2
 \le2C_{\rm B}K_W\sigma\left(
 \sqrt{\frac{\gamma_WD}{n}}+\frac{\gamma_WD}{n}\right)
 \le C_{\rm noise}\sigma\sqrt{\frac Dn},
\]
where the last step uses $n\ge C_{\rm samp}kD\ge D$.  Since $D\ge2$,
the intersection of the RIP and noise events has failure probability at most
\[
 (2D)^{-1}+2e^{-2D}\le D^{-1}.
\]
\end{proof}

Set $C_\Delta:=3C_{\rm op}+C_{\rm noise}$.  At every iterate,
$\rank(\err_t)\le k+r\le2k$ and \eqref{eq:productbound} holds.  Thus, on
the single event of \cref{lem:concentration},
\begin{equation}
 \delta_t:=\|\Delta_t\|_2\le C_\Delta(a_n\dist_t+b_n)
 \quad\text{for every }t\ge0.
 \label{eq:Deltabound}
\end{equation}
Indeed, linearity and the triangle inequality give
\begin{align*}
 \delta_t
 &\le\|(\cA_n^*\cA_n-I)(\err_t)\|_2+\|W_n\|_2\le C_{\rm op}a_n\|\err_t\|_2+C_{\rm noise}b_n\\
 &\le3C_{\rm op}a_n\dist_t+C_{\rm noise}b_n\le C_\Delta(a_n\dist_t+b_n).
\end{align*}

\subsection{Perturbed block dynamics and the master recursion}
\label{sec:master}

We next transfer the population block-wise contraction to the empirical gradient-descent dynamics. Let $U_0=W$ and define
\begin{equation}
 \Delta_{\alpha\beta,t}=U_\alpha^\top\widetilde\Delta_tU_\beta,
 \quad \alpha,\beta\in\{+,-,0\}.
 \label{eq:Deltablocks}
\end{equation}
Every such block has 2-norm at most $\delta_t$. Decompose the projected perturbations on three subspaces as $\operatorname{col}(U_+),\operatorname{col}(U_-),\operatorname{col}(W)$ as
\begin{align}
P_{S,t}&:=U_+^\top\widetilde\Delta_tZ_t=\Delta_{++,t}S_t+\Delta_{+-,t}N_t+\Delta_{+0,t}T_t,\nonumber\\ P_{N,t}&:=U_-^\top\widetilde\Delta_tZ_t=\Delta_{-+,t}S_t+\Delta_{--,t}N_t+\Delta_{-0,t}T_t,\nonumber\\ P_{T,t}&:=W^\top\widetilde\Delta_tZ_t=\Delta_{0+,t}S_t+\Delta_{0-,t}N_t+\Delta_{00,t}T_t.
 \label{eq:Pblocks}
\end{align}
Projection of \eqref{eq:empiricalupdate} yields
\begin{equation}
 S_{t+1}=S_t^+-\eta P_{S,t},\quad
 N_{t+1}=N_t^+-\eta P_{N,t},\quad
 T_{t+1}=T_t^+-\eta P_{T,t}.
 \label{eq:perturbedcoeff}
\end{equation}
For any two coefficient rows $X,Y$ with perturbations $P_X,P_Y$, the elementary
identity
\[
 (X^+-\eta P_X)(Y^+-\eta P_Y)^\top
 =X^+Y^{+\top}-\eta(X^+P_Y^\top+P_XY^{+\top})
 +\eta^2P_XP_Y^\top
\]
holds.  Apply it to $(X,Y)=(S_t,S_t),(S_t,N_t),(S_t,T_t),(N_t,N_t),(N_t,T_t),(T_t,T_t)$ and subtract
$\Lambda$ only in the $(S_t,S_t)$ block.  This gives the exact six-block identities
\begin{align}
 A_{t+1}&=A_t^+-\eta(S_t^+P_{S,t}^\top+P_{S,t}S_t^{+\top}) +\eta^2P_{S,t}P_{S,t}^\top,\nonumber\\
 B_{t+1}&=B_t^+-\eta(S_t^+P_{N,t}^\top+P_{S,t}N_t^{+\top}) +\eta^2P_{S,t}P_{N,t}^\top,\nonumber\\
 C_{t+1}&=C_t^+-\eta(S_t^+P_{T,t}^\top+P_{S,t}T_t^{+\top})
+\eta^2P_{S,t}P_{T,t}^\top,\nonumber\\
 E_{t+1}&=E_t^+-\eta(N_t^+P_{N,t}^\top+P_{N,t}N_t^{+\top}) +\eta^2P_{N,t}P_{N,t}^\top,\nonumber\\
 F_{t+1}&=F_t^+-\eta(N_t^+P_{T,t}^\top+P_{N,t}T_t^{+\top})
+\eta^2P_{N,t}P_{T,t}^\top,\nonumber\\
 G_{t+1}&=G_t^+-\eta(T_t^+P_{T,t}^\top+P_{T,t}T_t^{+\top})
 +\eta^2P_{T,t}P_{T,t}^\top.
 \label{eq:sixblocks}
\end{align}
Inside $\dist_t\le c_{\rm loc}\sigma_r$, \eqref{eq:productbound} and
$M_+=U_+\Lambda U_+^\top$ give
\[
 \|Z_t\|_2^2=\|Z_tZ_t^\top\|_2
 \le\|M_+\|_2+\|Z_tZ_t^\top-M_+\|_2
 \le2\sigma_1+3c_{\rm loc}\sigma_r\le C_H\sigma_1
\]
for an absolute constant $C_H>0$.
Thus $\|Z_t\|_2\le \sqrt{C_H}\sqrt{\sigma_1}$.  Moreover,
$Y_t^+=(I-\eta\mathcal R_t/2)Y_t$,
$\|\mathcal R_t\|_2\le 2\sigma_1+3c_{\rm loc}\sigma_r\le C_H\sigma_1$, and
$\eta\sigma_1\le c_{\rm step}$, so there exist constants $C_1,C_2>0$ such that
\begin{equation}
 \max\{\|S_t^+\|_2,\|N_t^+\|_2,\|T_t^+\|_2\}
 \le C_1\sqrt{\sigma_1},\quad
 \max\{\|P_{S,t}\|_2,\|P_{N,t}\|_2,\|P_{T,t}\|_2\}
 \le C_2\delta_t\sqrt{\sigma_1}.
 \label{eq:coefficientbounds}
\end{equation}
For the first estimate in \eqref{eq:coefficientbounds}, each coefficient block
is a row compression of $Y_t^+$ and hence has norm at most $\|Y_t^+\|_2$;
orthogonality gives $\|Y_t\|_2=\|Z_t\|_2$, so
\[
\|Y_t^+\|_2\le(1+\eta\|\mathcal R_t\|_2/2)\|Y_t\|_2
\le (1+\tfrac{\eta \sigma_1}{2}C_H)\sqrt{C_H}\sqrt{\sigma_1}
\le C_1\sqrt{\sigma_1},
\]
where $C_1=(1+\frac{c_{\rm step}}{2}C_H)\sqrt{C_H}$. For the second estimate, for example,
\[
 \|P_{S,t}\|_2
 \le\delta_t(\|S_t\|_2+\|N_t\|_2+\|T_t\|_2)
 \le3\delta_t\|Y_t\|_2
 \le C_2\delta_t\sqrt{\sigma_1},
\]
where $C_2=3\sqrt{C_H}$. The same argument applies to the $N$ and $T$ rows.
Thus, in each of the first five equations of \eqref{eq:sixblocks}, the first- and second-order empirical corrections are at most
\begin{equation}
 C\eta\sigma_1\delta_t
 \quad\text{and}\quad C\eta^2\sigma_1\delta_t^2,
 \label{eq:crudecorrections}
\end{equation}
where $C=\max\{2C_1C_2,C_2^2,6+3C_Hc_{\rm step}\}$.
For example, the two corrections in the $A$ equation satisfy
\begin{align*}
 \eta\|S_t^+P_{S,t}^\top+P_{S,t}S_t^{+\top}\|_2
 &\le2\eta\|S_t^+\|_2\|P_{S,t}\|_2
 \le C\eta\sigma_1\delta_t,\\
 \eta^2\|P_{S,t}P_{S,t}^\top\|_2
 &=\eta^2\|P_{S,t}\|_2^2
 \le C\eta^2\sigma_1\delta_t^2.
\end{align*}
For a cross block, such as $B$, the same calculation is applied separately to $S_t^+P_{N,t}^\top$ and $P_{S,t}N_t^{+\top}$; the other three nonzero blocks are identical after relabeling.  This proves \eqref{eq:crudecorrections} for each of the first five equations.
%, rather than only for their maximum.

The null block admits a sharper first-order bound.  Expanding \eqref{eq:Pblocks},
\[
 T_t^+P_{T,t}^\top=T_t^+S_t^\top\Delta_{0+,t}^\top
 +T_t^+N_t^\top\Delta_{0-,t}^\top
 +T_t^+T_t^\top\Delta_{00,t}^\top.
\]
Set $K_{S,t}=S_tS_t^\top=\Lambda+A_t$. Using
$T_t^+=T_t(I-\eta H_t/2)$ and \eqref{eq:biggram},
\begin{align*}
 T_t^+S_t^\top&=C_t^\top-\frac\eta2(K_{S,t}C_t+B_tF_t+C_tG_t)^\top,\\
 T_t^+N_t^\top&=F_t^\top-\frac\eta2(B_t^\top C_t+E_tF_t+F_tG_t)^\top,\\
 T_t^+T_t^\top&=G_t-\frac\eta2(C_t^\top C_t+F_t^\top F_t+G_t^2).
\end{align*}
Let $\rho=\dist_t$.  Since $\|K_{S,t}\|_2\le 2\sigma_1+\rho$, $\rho\le c_{\rm loc}\sigma_r$, $\eta\sigma_1\le c_{\rm step}$, and every other displayed block has norm at most $\rho$, the preceding three identities imply, respectively,
\begin{align*}
 \|T_t^+S_t^\top\|_2
 &\le \rho+\frac\eta2(2\sigma_1\rho+3\rho^2)\le \frac{1}{6}C\rho,\\
 \|T_t^+N_t^\top\|_2
 &\le \rho+\frac{3\eta}{2}\rho^2\le \frac{1}{6}C\rho,\\
 \|T_t^+T_t^\top\|_2
 &\le \rho+\frac{3\eta}{2}\rho^2\le \frac{1}{6}C\rho.
\end{align*}
Consequently,
\[
 \|T_t^+P_{T,t}^\top\|_2
 \le\delta_t(\|T_t^+S_t^\top\|_2+
 \|T_t^+N_t^\top\|_2+\|T_t^+T_t^\top\|_2)
 \le \frac{1}{2}C\delta_t\rho.
\]
Together with \eqref{eq:coefficientbounds}, this proves
\begin{equation}
 \|T_t^+P_{T,t}^\top\|_2+\|P_{T,t}T_t^{+\top}\|_2\le C\delta_t\dist_t,
 \quad
 \|P_{T,t}P_{T,t}^\top\|_2\le C\sigma_1\delta_t^2.
 \label{eq:sharpG}
\end{equation}

The preceding estimates quantify how the empirical gradient perturbs each component of the population dynamics. We now combine the population contraction in \cref{lem:population}, the uniform concentration bound in \cref{lem:concentration}, and the refined null-block estimate \eqref{eq:sixblocks}, \eqref{eq:sharpG} into a single recursion for the block error $\dist_t$. %The resulting lemma establishes the key dichotomy underlying the proof of the main theorem: outside the statistical neighborhood, the error decreases through a quadratic contraction, whereas inside that neighborhood, the iterates remain uniformly stable. It therefore serves as the deterministic bridge between the blockwise dynamics and the final finite-sample convergence guarantee.

\begin{lemma}[Master recursion]
\label{lem:master}
After decreasing $c_{\rm loc}$ and $c_{\rm step}$ in \cref{lem:population} if
necessary, there exist constants $K_{\rm ball},C_{\rm stat}\ge1$ and
$c_{\rm pert},c_{\rm rec}>0$, depending only on $K_A$ and $K_\epsilon$, with
the following property.  Suppose
\begin{equation}
 \dist_t\le c_{\rm loc}\sigma_r,\quad
 0<\eta\le c_{\rm step}/\sigma_1,\quad
 a_n\le c_{\rm pert}/\kappa,\quad
 \kappa b_n\le c_{\rm pert}\sigma_r,
 \label{eq:masterconditions}
\end{equation}
and \eqref{eq:Deltabound} holds.  Define
$\eps=C_{\rm stat}\kappa b_n$.  Then
\begin{align}
 \dist_t\ge2K_{\rm ball}\eps&\quad\Longrightarrow\quad
 \dist_{t+1}\le\dist_t-c_{\rm rec}\eta\dist_t^2,
 \label{eq:outside}\\
 \dist_t\le2K_{\rm ball}\eps&\quad\Longrightarrow\quad
 \dist_{t+1}\le2K_{\rm ball}\eps.
 \label{eq:inside}
\end{align}
\end{lemma}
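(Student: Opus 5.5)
The plan is to combine the population bounds of \cref{lem:population} with the exact perturbation identities \eqref{eq:sixblocks}, the correction bounds \eqref{eq:crudecorrections} and \eqref{eq:sharpG}, and the concentration bound \eqref{eq:Deltabound}. This yields a one-step bound for each of the six blocks separately, in terms of $\rho:=\dist_t$ and $x:=\dist_t^{\rm nz}\le\rho$. Substituting $\delta_t\le C_\Delta(a_n\rho+b_n)$ gives the following.

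For the five non-null blocks,
\[
\max_{H=A,B,C,E,F}\|H_{t+1}\|_2\le(1-c_{\rm pop}\eta\sigma_r)x+C_{\rm pop}\eta\rho^2+C\eta\sigma_1C_\Delta(a_n\rho+b_n)+C\eta^2\sigma_1C_\Delta^2(a_n\rho+b_n)^2 .
\]
The terms simplify as follows:
\begin{itemize}
\item $\eta\sigma_1a_n\rho\le c_{\rm pert}\eta\sigma_r\rho$, since $a_n\le c_{\rm pert}/\kappa$.
\item $\eta\sigma_1b_n=\eta\sigma_r\kappa b_n$.
\item The second-order term is at most $c_{\rm step}\eta C(a_n^2\rho^2+b_n^2)$, which is negligible relative to the first-order terms.
\end{itemize}
Choosing $c_{\rm pert}$ small therefore gives
\[
\max_{H\neq G}\|H_{t+1}\|_2\le\rho-\tfrac{c_{\rm pop}}2\eta\sigma_r\rho+C_{\rm pop}\eta\rho^2+C'\eta\sigma_r\kappa b_n .
\]
For the null block, \eqref{eq:popg} and \eqref{eq:sharpG} give
\[
\|G_{t+1}\|_2\le g-c_{\rm pop}\eta g^2+C\eta C_\Delta(a_n\rho^2+b_n\rho)+C\eta^2\sigma_1\delta_t^2,\qquad g:=\|G_t\|_2 .
\]

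\emph{Outside regime, \eqref{eq:outside}.} Assume $\rho\ge2K_{\rm ball}\eps=2K_{\rm ball}C_{\rm stat}\kappa b_n$.
\begin{itemize}
\item Non-null blocks. Since $\rho\le c_{\rm loc}\sigma_r$, we have $C_{\rm pop}\eta\rho^2\le C_{\rm pop}c_{\rm loc}\eta\sigma_r\rho$, which is absorbed after shrinking $c_{\rm loc}$. The noise term $C'\eta\sigma_r\kappa b_n\le C'\eta\sigma_r\rho/(2K_{\rm ball}C_{\rm stat})$ is absorbed once $K_{\rm ball}C_{\rm stat}$ is large. The result is $\rho-c\eta\sigma_r\rho\le\rho-(c/c_{\rm loc})\eta\rho^2$, again using $\sigma_r\ge\rho/c_{\rm loc}$.
\item Null block. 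Because $q\mapsto q-c_{\rm pop}\eta q^2$ is increasing on $[0,\rho]$ (as in \cref{lem:population}), the population part is at most $\rho-c_{\rm pop}\eta\rho^2$ whether or not $g$ attains the maximum. The perturbation $C\eta(a_n\rho^2+b_n\rho)+C\eta^2\sigma_1\delta_t^2$ is at most $\tfrac{c_{\rm pop}}2\eta\rho^2$. This uses $a_n\le c_{\rm pert}$ small, $b_n\le\rho/(2K_{\rm ball}C_{\rm stat})$, and $\eta\sigma_1\le c_{\rm step}$.
\end{itemize}
Taking the maximum over the six blocks gives \eqref{eq:outside} with some $c_{\rm rec}>0$.

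\emph{Inside regime, \eqref{eq:inside}.} Assume $\rho\le2K_{\rm ball}\eps$.
\begin{itemize}
\item Non-null blocks. The bound is at most $\rho+\eta\sigma_r\bigl(C'\kappa b_n-\tfrac{c}{2}\rho\bigr)$. If $\rho\ge2C'\kappa b_n/c$, the bracket is nonpositive and the bound is at most $\rho\le2K_{\rm ball}\eps$. Otherwise it is at most $(2C'/c)\kappa b_n+c_{\rm step}C'\kappa b_n$, which is at most $2K_{\rm ball}\eps$ once $C_{\rm stat}$ exceeds a fixed multiple of $C'/c$.
\item Null block, case $g\ge K_{\rm ball}\eps$. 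The decrease $c_{\rm pop}\eta K_{\rm ball}^2\eps^2$ dominates $C\eta(a_n\rho^2+b_n\rho)\le C\eta\bigl(c_{\rm pert}4K_{\rm ball}^2\eps^2+2K_{\rm ball}\eps b_n\bigr)$, because $b_n\le\eps/C_{\rm stat}$ with $C_{\rm stat}$ large and $c_{\rm pert}$ small. Hence $\|G_{t+1}\|_2\le g\le\rho$.
\item Null block, case $g<K_{\rm ball}\eps$. Then $\|G_{t+1}\|_2\le K_{\rm ball}\eps+C\eta(\dots)$, and the correction is a small multiple of $K_{\rm ball}\eps$ because $\eta\rho\le c_{\rm step}c_{\rm loc}$ and $\eta b_n\le c_{\rm step}c_{\rm pert}$. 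So the bound stays below $2K_{\rm ball}\eps$.
\end{itemize}

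\emph{Main obstacle.} The hard step is the null block. It has no linear restoring term, yet the empirical forcing $\eta\delta_t\rho$ is first order. The sharpened estimate \eqref{eq:sharpG} is exactly what saves the argument: it replaces $\sigma_1\delta_t$ by $\delta_t\dist_t$, so the forcing becomes $a_n\rho^2+b_n\rho$ rather than $\sigma_1 b_n$. That quadratic-order forcing can be beaten by the quadratic contraction $-\eta g^2$ exactly when $\rho\gtrsim b_n$. The monotonicity of $q\mapsto q-c\eta q^2$ is what handles the case where $\|G_t\|_2$ is much smaller than $\dist_t$.

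The order of constants is: first fix $c_{\rm pop},C_{\rm pop}$ and the correction constants; then shrink $c_{\rm loc}$; then choose $C_{\rm stat}$ and $K_{\rm ball}$ large; and finally take $c_{\rm pert}$ small relative to these. Each choice depends only on $K_A$ and $K_\epsilon$.
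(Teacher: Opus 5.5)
Your proposal is correct and follows the paper's proof. You use the same combined one-step bounds (the paper's \eqref{eq:nzmaster}--\eqref{eq:gmaster}), the same reliance on the sharpened null-block estimate \eqref{eq:sharpG} so that the null forcing is $\eta\delta_t\dist_t$ rather than $\eta\sigma_1\delta_t$, and the same absorption of $\delta_t$ relative to $\dist_t$ outside the ball. The differences in case handling are cosmetic: outside the ball you use $x\le\rho$ and monotonicity of $q\mapsto q-c_{\rm pop}\eta q^2$ where the paper splits on $x_t,g_t\gtrless\dist_t/2$; inside you split on $g\gtrless K_{\rm ball}\eps$ where the paper evaluates at the worst-case radius $r_{\rm ball}$. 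The $\eta^2\sigma_1\delta_t^2$ term you leave implicit in the inside case $g\ge K_{\rm ball}\eps$ is absorbed in the same way as your other second-order terms.
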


\begin{proof}
Enlarge a deterministic constant $C_0\ge1$ so that \eqref{eq:Deltabound},
\cref{lem:population}, \eqref{eq:crudecorrections}, and \eqref{eq:sharpG} imply
\begin{align}
 \dist_{t+1}^{\rm nz}
 &\le(1-\mu_0\eta\sigma_r)\dist_t^{\rm nz}
 +C_0\eta\dist_t^2+C_0\eta\sigma_1\delta_t
 +C_0\eta^2\sigma_1\delta_t^2,
 \label{eq:nzmaster}\\
 g_{t+1}
 &\le g_t-\mu_0\eta g_t^2+C_0\eta\delta_t\dist_t
 +C_0\eta^2\sigma_1\delta_t^2,
 \quad g_t:=\|G_t\|_2,
 \label{eq:gmaster}
\end{align}
where $\mu_0=\min\{c_{\rm pop},1\}$ and
$\delta_t\le C_0(a_n\dist_t+b_n)$.  Choose
$0<c_{\rm rec}\le \mu_0/8$.

Decrease $c_{\rm loc},c_{\rm step}$ if necessary such that
\begin{equation}
 \begin{gathered}
 c_{\rm loc}\le1,\quad C_0c_{\rm loc}\le \mu_0/64,\quad
 C_0c_{\rm step}\le \mu_0/64,\\
 c_{\rm rec}c_{\rm loc}c_{\rm step}\le1/4,\quad
 2\mu_0c_{\rm loc}c_{\rm step}\le1,\quad \mu_0c_{\rm step}\le1.
 \end{gathered}
 \label{eq:smallconstants}
\end{equation}
Choose $K_{\rm ball}\ge1$ and then
$0<\theta\le \mu_0/(64C_0)$ so that
\begin{align}
 C_0(c_{\rm loc}+\theta+c_{\rm step}c_{\rm loc}\theta^2)&\le \mu_0/4,\nonumber\\
 C_0(\theta+c_{\rm step}\theta^2)&\le \mu_0/8,\nonumber\\
 C_0\left(\theta+\frac\theta{2K_{\rm ball}}+c_{\rm step}c_{\rm loc}\theta^2\right)&\le \mu_0/2,\nonumber\\
 C_0\left(\frac\theta{2K_{\rm ball}}+\frac{c_{\rm step}\theta^2}{4K_{\rm ball}^2}\right)&\le \mu_0/2.
 \label{eq:absorption}
\end{align}
Next choose $C_{\rm stat}$ large and $c_{\rm pert}$ small so that
\begin{equation}
 \frac{C_0}{2K_{\rm ball}C_{\rm stat}}\le\frac\theta2,\quad
 C_0\left(2K_{\rm ball}c_{\rm pert}+\frac1{C_{\rm stat}}\right)\le\theta,\quad
 2K_{\rm ball}C_{\rm stat}c_{\rm pert}\le\min\{\theta,c_{\rm loc}\}.
 \label{eq:hierarchy}
\end{equation}
%The selection order is noncircular: $(c_{\rm loc},c_{\rm step})\to K_{\rm ball}\to\theta\to C_{\rm stat}\to c_{\rm pert}$.

If $\dist_t=0$, the outside condition can hold only when $\eps=0$; then $\delta_t=0$ and the conclusion follow directly from \eqref{eq:nzmaster} and \eqref{eq:gmaster}. Henceforth, assume that $\dist_t>0$ and $\dist_t\ge2K_{\rm ball}\eps$. Since
$b_n\le\dist_t/(2K_{\rm ball}C_{\rm stat}\kappa)$,
\eqref{eq:hierarchy} gives
\begin{equation}
 \delta_t\le C_0(a_n\dist_t+b_n)\le \frac{C_0}{\kappa}(c_{\rm pert}+1/(2K_{\rm ball}C_{\rm stat}))\dist_t
 \le\frac\theta\kappa\dist_t.
 \label{eq:relative}
\end{equation}
Write $x_t=\dist_t^{\rm nz}$.  In \eqref{eq:nzmaster}, the three positive terms after the contraction are bounded by
\begin{align*}
 C_0\eta\dist_t^2&\le C_0c_{\rm loc}\eta\sigma_r\dist_t,\\
 C_0\eta\sigma_1\delta_t&\le C_0\theta\eta\sigma_r\dist_t,\\
 C_0\eta^2\sigma_1\delta_t^2
 &\le C_0c_{\rm step}c_{\rm loc}\theta^2\eta\sigma_r\dist_t.
\end{align*}
If $x_t\ge\dist_t/2$, the first line of \eqref{eq:absorption} bounds the
sum of the three positive terms by
$(\mu_0/4)\eta\sigma_r\dist_t$.  Hence
\[
 x_{t+1}
 \le \dist_t-\frac{\mu_0}{2}\eta\sigma_r\dist_t
       +\frac{\mu_0}{4}\eta\sigma_r\dist_t
 \le \dist_t-c_{\rm rec}\eta\dist_t^2,
\]
where the last step uses $\dist_t\le c_{\rm loc}\sigma_r\le\sigma_r$ and
$c_{\rm rec}\le \mu_0/8$.  If $x_t<\dist_t/2$, the same forcing bound and
$\mu_0\eta\sigma_r\le \mu_0c_{\rm step}\le1$ give
\[
 x_{t+1}\le\frac12\dist_t+\frac14\dist_t
 =\frac34\dist_t.
\]
Finally, \eqref{eq:smallconstants} gives
$c_{\rm rec}\eta\dist_t\le1/4$, so
$3\dist_t/4\le\dist_t-c_{\rm rec}\eta\dist_t^2$.

For \eqref{eq:gmaster}, if $g_t\ge\dist_t/2$, the negative term is at least
$\mu_0\eta\dist_t^2/4$, whereas \eqref{eq:relative} bounds the two positive terms by
\[
 C_0\theta\eta\dist_t^2/\kappa
 +C_0c_{\rm step}\theta^2\eta\dist_t^2/\kappa^2.
\]
The second line of \eqref{eq:absorption} bounds the sum of these two terms by
$(\mu_0/8)\eta\dist_t^2$.  Therefore, when $g_t\ge\dist_t/2$,
\[
 g_{t+1}\le \dist_t-\frac{\mu_0}{4}\eta\dist_t^2
                   +\frac{\mu_0}{8}\eta\dist_t^2
 \le\dist_t-c_{\rm rec}\eta\dist_t^2.
\]
If $g_t<\dist_t/2$, then
\[
 g_{t+1}\le\frac12\dist_t+\frac{\mu_0}{8}\eta\dist_t^2
 \le\frac34\dist_t
 \le\dist_t-c_{\rm rec}\eta\dist_t^2,
\]
where \eqref{eq:smallconstants} justifies the last two comparisons.  Taking
the maximum of $x_{t+1}$ and $g_{t+1}$ proves \eqref{eq:outside}.

%If $\eps=0$ and $\dist_t=0$, then $b_n=0$ and \eqref{eq:Deltabound} gives $\delta_t=0$. All error blocks are zero, and \eqref{eq:nzmaster}--\eqref{eq:gmaster} yield $\dist_{t+1}=0$. This proves \eqref{eq:inside} for the zero-radius case without division by $\eps$ or $r_{\rm ball}$.

For the remaining case assume $\eps>0$ and let
$\dist_t\le r_{\rm ball}:=2K_{\rm ball}\eps$.  The second and third
conditions in \eqref{eq:hierarchy} give
\begin{equation}
 \delta_t\le\theta\eps/\kappa,\quad
 r_{\rm ball}\le\min\{\theta,c_{\rm loc}\}\sigma_r.
 \label{eq:ballbounds}
\end{equation}
For $x=\dist_t^{\rm nz}\in[0,r_{\rm ball}]$, the three positive terms in
\eqref{eq:nzmaster}, divided by $\eta\sigma_rr_{\rm ball}$, are bounded by
\[
 C_0\theta,\quad \frac{C_0\theta}{2K_{\rm ball}},\quad
 C_0c_{\rm step}c_{\rm loc}\theta^2.
\]
Their sum is at most $\mu_0/2$ by \eqref{eq:absorption}.  Then
\[
 x_{t+1}\le(1-\mu_0\eta\sigma_r)r_{\rm ball}
 +\frac{\mu_0}{2}\eta\sigma_rr_{\rm ball}
 =r_{\rm ball}-\frac{\mu_0}{2}\eta\sigma_rr_{\rm ball}
 \le r_{\rm ball}.
\]

The two positive terms in
\eqref{eq:gmaster}, evaluated at the worst-case radius and divided by
$\eta r_{\rm ball}^2$, are at most $C_0\theta/(2K_{\rm ball})$ and
$C_0c_{\rm step}\theta^2/(4K_{\rm ball}^2)$.  The final line of
\eqref{eq:absorption} shows that their sum is at most $\mu_0/2$. Since
$g\mapsto g-\mu_0\eta g^2$ is increasing on $[0,r_{\rm ball}]$ by \eqref{eq:smallconstants}, evaluation at
$g=r_{\rm ball}$ gives
\[
 g_{t+1}\le r_{\rm ball}-\mu_0\eta r_{\rm ball}^2
 +\frac{\mu_0}{2}\eta r_{\rm ball}^2
 =r_{\rm ball}-\frac{\mu_0}{2}\eta r_{\rm ball}^2\le r_{\rm ball}.
\]
This proves \eqref{eq:inside}.
\end{proof}

%\begin{remark}[Conditional linear phase] \label{rem:linear}
%Equations \eqref{eq:nzmaster} and \eqref{eq:Deltabound} show that a sufficient condition for linear contraction of the nonzero blocks is \begin{equation} \dist_t^{\rm nz}\ge C_{\rm lin}\left( \frac{\dist_t^2}{\sigma_r}+\kappa a_n\dist_t+\kappa b_n\right). \label{eq:linearcondition} \end{equation} Indeed, using $a_n\le1$, $b_n\lesssim\sigma_1$, and $\eta\sigma_1\le c_{\rm step}$, the positive terms in \eqref{eq:nzmaster} are at most \[ C\eta\sigma_r\left( \frac{\dist_t^2}{\sigma_r}+\kappa a_n\dist_t+\kappa b_n\right).\] To see this directly, \eqref{eq:Deltabound} gives $\sigma_1\delta_t\le C\sigma_r\kappa(a_n\dist_t+b_n)$, while $\delta_t\le C(a_n\dist_t+b_n)\le C\sigma_1$ in the local region.  Hence \[ \eta^2\sigma_1\delta_t^2 \le C(\eta\sigma_1)\eta\sigma_1\delta_t \le C\eta\sigma_r\kappa(a_n\dist_t+b_n),\] and the remaining nonlinear term is exactly $\eta\dist_t^2=\eta\sigma_r(\dist_t^2/\sigma_r)$. For sufficiently large $C_{\rm lin}$ this is absorbed into a fixed fraction of $\mu_0\eta\sigma_r\dist_t^{\rm nz}$.  Thus $\dist_{t+1}^{\rm nz}\le(1-c\eta\sigma_r)\dist_t^{\rm nz}$. With $\eta\asymp1/\sigma_1$, the per-step factor is $1-c/\kappa$.  When $G_t$ dominates, the nonzero blocks only track the moving forcing scale $\dist_t^2/\sigma_r$, and the overall rate is governed by the null block. \end{remark}

\section{Proof of the main theorem}
\label{sec:main-proof}

\begin{proof}[Proof of \cref{thm:main}]
Fix the constants in \cref{lem:master}.  Choose
$c_{\rm thm}\le\min\{c_{\rm loc},c_{\rm step}\}$ and then choose
$C_{\rm thm}$ so that it dominates
$C_{\rm samp}$, $c_{\rm pert}^{-2}$, $C_\Delta$, $c_0:=3c_{\rm thm}/\min\{1,c_{\rm thm}c_{\rm rec}\}$, $6K_{\rm ball} C_{\rm stat}$, and the remaining fixed
constants from \cref{lem:master}.  Fix
$c_{\rm prob}\le1$, matching the exponent in \cref{lem:concentration}.
The sample-size condition gives
\[
 a_n=k\sqrt{D/n}\le\frac1{\sqrt{C_{\rm thm}}\kappa}\le\frac{c_{\rm pert}}\kappa, \quad \kappa b_n=\kappa\sigma\sqrt{D/n}
 \le\frac{\sigma_r}{\sqrt{C_{\rm thm}}}\le c_{\rm pert}\sigma_r.
\]
Since $k\ge r\ge1$, $\kappa\ge1$, and the fixed choice satisfies
$C_{\rm thm}\ge C_{\rm samp}$, the same sample-size condition also gives
$n\ge C_{\rm samp}kD$, so \cref{lem:concentration} is applicable.  Thus all
perturbation conditions in \eqref{eq:masterconditions}
hold on its single uniform event.

In the basis $[U_+,U_-,W]$, the six distinct blocks of
$Z_0Z_0^\top-M_+$ are $A_0,B_0,C_0,E_0,F_0,G_0$.  Thus
\begin{equation}
 \dist_0\le\|Z_0Z_0^\top-M_+\|_2
 \le c_{\rm thm}\sigma_r\le c_{\rm loc}\sigma_r.
 \label{eq:initialD}
\end{equation}
Moreover, by \eqref{eq:hierarchy},
$2K_{\rm ball}\eps=2K_{\rm ball}C_{\rm stat}\kappa b_n
\le c_{\rm loc}\sigma_r$.
We show that
$\dist_t\le c_{\rm loc}\sigma_r$ for every $t$, so the local lemma remains valid throughout the trajectory.  Indeed, the claim holds at $t=0$ by
\eqref{eq:initialD}.  If it holds at time $t$, then either
$\dist_t\ge2K_{\rm ball}\eps$, in which case \eqref{eq:outside} gives
$\dist_{t+1}\le\dist_t\le c_{\rm loc}\sigma_r$, or
$\dist_t\le2K_{\rm ball}\eps$, in which case \eqref{eq:inside} and the
preceding bound on $2K_{\rm ball}\eps$ give the same conclusion.  This proves
the induction step.

Let, with the convention $\inf\varnothing=\infty$, denote $ t_\star=\inf\{t\ge0:\dist_t\le2K_{\rm ball}\eps\}.$

Fix any finite $t<t_\star$.  For every $0\le j<t$, the definition of
$t_\star$ gives $\dist_j,\dist_{j+1}>2K_{\rm ball}\eps\ge0$, and
\eqref{eq:outside} gives
\[
 \dist_{j+1}\le\dist_j(1-c_{\rm rec}\eta\dist_j).
\]
The local step-size conditions ensure
$c_{\rm rec}\eta\dist_j\le
c_{\rm rec}c_{\rm step}c_{\rm loc}\sigma_r/\sigma_1\le1/2$.
Hence the factor on the right is positive, and using
$(1-u)^{-1}\ge1+u$ for $u\in[0,1)$, we have 
\[
 \frac1{\dist_{j+1}}
 \ge\frac1{\dist_j(1-c_{\rm rec}\eta\dist_j)}
 \ge\frac1{\dist_j}+c_{\rm rec}\eta.
\]
\eqref{eq:initialD} implies
$1/\dist_0\ge1/(c_{\rm thm}\sigma_r)$, and summing the above inequality from $j=0$ to $t-1$ yields
\begin{equation}
 \dist_t\le\frac1{1/\dist_0+c_{\rm rec}\eta t}
 \le\frac{c_0}{\eta t+1/\sigma_r}\le\frac{C_{\rm thm}}{3(\eta t+1/\sigma_r)}.
 \label{eq:recursionsolution}
\end{equation} If $t_\star<\infty$,
then \eqref{eq:inside}, applied inductively from time $t_\star$, gives
$\dist_t\le2K_{\rm ball}\eps$ for all $t\ge t_\star$.  If
$t_\star=\infty$, the
pre-entry estimate \eqref{eq:recursionsolution} holds for every finite $t$.
Combining these exhaustive cases, we have
\begin{equation}
 \dist_t\le \max\big\{\frac{C_{\rm thm}}{3(\eta t+1/\sigma_r)},2K_{\rm ball} C_{\rm stat}\kappa b_n\big\}\le \frac{1}{3}C_{\rm thm}\max\big\{\frac{1}{\eta t+1/\sigma_r},\kappa b_n\big\}.
 \label{eq:Dfinal}
\end{equation}
Combining \eqref{eq:Dfinal} and
\eqref{eq:productbound}, we prove the maximum-form bound \eqref{eq:mainrate}.  

Assume $\eps_{\rm stat}>0$ and
$t\ge 1/(\eta\eps_{\rm stat})$.  The time condition gives
$1/(\eta t+1/\sigma_r)\le1/(\eta t)\le
\eps_{\rm stat}$, which proves the first statistical inequality.  Moreover,
$\rank(L_tR_t^\top-X^\star)\le k+r\le2k$.  The general singular-value
inequality $\|M\|_F^2\le\rank(M)\|M\|_2^2$ therefore gives
\[
 \|L_tR_t^\top-X^\star\|_F^2
 \le 2k\|L_tR_t^\top-X^\star\|_2^2
 \le 2C_{\rm thm}k\kappa^2\frac Dn\sigma^2,
\]
which proves the second statistical inequality.  Finally, if $\sigma=0$,
then $b_n=\eps=0$ and
\eqref{eq:outside} applies whenever $\dist_t>0$.  If an iterate reaches
$\dist_t=0$, \eqref{eq:inside} with radius zero keeps all subsequent iterates
there; otherwise \eqref{eq:recursionsolution} holds for every finite $t$ and
tends to zero.  In both cases \eqref{eq:productbound} gives exact recovery in
the limit.
\end{proof}

\begin{proof}[Proof of \cref{cor:matched}]
The proof of \cref{thm:main} shows that, 
\begin{equation}
  \dist_t\le c_{\rm loc}\sigma_r
  \quad\text{for every }t\ge0.
  \label{eq:matched-local-bound}
\end{equation}
Set $x_t:=\dist_t^{\rm nz}$ and $g_t:=\|G_t\|_2$. By definition, $\dist_t=\max\{x_t,g_t\}$. If $D=2r$, then $C_t=F_t=G_t=0$. It follows immediately that $\dist_t=x_t$. Suppose now that $k=r$. Then $S_t\in\mathbb R^{r\times r}$ is square.
Recall that $S_tS_t^\top=\Lambda+A_t$. By \eqref{eq:Sbounds} and \eqref{eq:matched-local-bound}, $S_tS_t^\top\succeq (2-c_{\rm loc})\sigma_r I_r$.
Thus $S_t$ is invertible for every $t$, and $$\|S_t^{-1}\|_2^2
  =\frac{1}{\lambda_{\min}(S_tS_t^\top)}
  \le \frac{1}{(2-c_{\rm loc})\sigma_r}.$$ Since $C_t=S_tT_t^\top$, we have $G_t=T_tT_t^\top \notag=C_t^\top S_t^{-\top}S_t^{-1}C_t$
and hence
\begin{align}
  g_t=\|G_t\|_2&\le \|S_t^{-1}\|_2^2\|C_t\|_2^2 
  \le\frac{\|C_t\|_2^2}{(2-c_{\rm loc})\sigma_r}\le\frac{x_t^2}{(2-c_{\rm loc})\sigma_r}.
  \label{eq:matched-G-control}
\end{align}
Moreover, \eqref{eq:matched-local-bound} gives $x_t\le\dist_t\le c_{\rm loc}\sigma_r$ and since $c_{\rm loc}\le1$, we have $\frac{x_t}{(2-c_{\rm loc})\sigma_r} \le \frac{c_{\rm loc}}{2-c_{\rm loc}}\le1$. It follows from \eqref{eq:matched-G-control} that $g_t\le x_t$. Consequently,
\begin{equation}
  \dist_t=\max\{x_t,g_t\}=x_t,
  \quad\text{for every }t\ge0.
  \label{eq:matched-dist-equality}
\end{equation}

We next derive the linear recursion.  On the uniform concentration event,
\eqref{eq:Deltabound} and \eqref{eq:matched-dist-equality} imply
\begin{equation}
  \delta_t\le C_0(a_nx_t+b_n),\quad a_n=k\sqrt{\frac Dn},\quad b_n=\sigma\sqrt{\frac Dn}.
  \label{eq:matched-delta}
\end{equation}
Substituting $\dist_t=x_t$ into \eqref{eq:nzmaster} gives
\begin{align}
  x_{t+1}\le
  (1-\mu_0\eta\sigma_r)x_t
  +C_0\eta x_t^2  +C_0\eta\sigma_1(a_nx_t+b_n) 
  +C_0\eta^2\sigma_1(a_nx_t+b_n)^2.
  \label{eq:matched-expanded-recursion}
\end{align}
We now bound the positive terms on the right-hand side.  First, by
\eqref{eq:matched-local-bound},
\begin{equation}
  C_0\eta x_t^2
  \le
  C_0c_{\rm loc}\eta\sigma_r x_t.
  \label{eq:matched-local-quadratic}
\end{equation}
Next, the sample-size condition in \cref{thm:main} implies $a_n\le\frac{c_{\rm pert}}{\kappa}$ and $\kappa b_n\le c_{\rm pert}\sigma_r$.
Since $\sigma_1=\kappa\sigma_r$, we have
\begin{align}
  C_0\eta\sigma_1a_nx_t
  &=C_0\eta\sigma_r(\kappa a_n)x_t \le
  C_0c_{\rm pert}\eta\sigma_r x_t.
  \label{eq:matched-relative-perturbation}
\end{align}
The part depending only on the observation noise satisfies
\begin{equation} C_0\eta\sigma_1b_n=C_0\eta\sigma_r\kappa b_n.
\label{eq:matched-linear-noise}
\end{equation}
For the squared perturbation term, use
$(a_nx_t+b_n)^2\le2a_n^2x_t^2+2b_n^2$. The first component obeys
\begin{align}
  \eta^2\sigma_1a_n^2x_t^2=
  \left[
    (\eta\sigma_1)a_n^2\frac{x_t}{\sigma_r}
  \right]\eta\sigma_r x_t\le
  c_{\rm step}
  \frac{c_{\rm pert}^2}{\kappa^2}
  c_{\rm loc}\,
  \eta\sigma_r x_t,
  \label{eq:matched-squared-relative}
\end{align}
where we used $\eta\sigma_1\le c_{\rm step}$, $a_n\le\frac{c_{\rm pert}}{\kappa}$  and  $x_t\le c_{\rm loc}\sigma_r$. For the second component, observe that $b_n\le\frac{c_{\rm pert}\sigma_r}{\kappa}$ and $\eta\le\frac{c_{\rm step}}{\sigma_1}
  =\frac{c_{\rm step}}{\kappa\sigma_r}$
and hence $\eta b_n\le\frac{c_{\rm step}c_{\rm pert}}{\kappa^2}$. Consequently,
\begin{align}
  \eta^2\sigma_1b_n^2
  =(\eta b_n)\eta\sigma_r\kappa b_n \le
  \frac{c_{\rm step}c_{\rm pert}}{\kappa^2}
  \eta\sigma_r\kappa b_n.
  \label{eq:matched-squared-noise}
\end{align}

The constants $c_{\rm loc}$, $c_{\rm pert}$, and $c_{\rm step}$ in the proof
of \cref{thm:main} may be chosen sufficiently small, depending only on
$K_A$ and $K_\epsilon$, so that the terms in
\eqref{eq:matched-local-quadratic},
\eqref{eq:matched-relative-perturbation}, and
\eqref{eq:matched-squared-relative} satisfy
\[
  C_0\eta x_t^2
  +C_0\eta\sigma_1a_nx_t
  +2C_0\eta^2\sigma_1a_n^2x_t^2
  \le
  \frac{\mu_0}{2}\eta\sigma_r x_t.
\]
Similarly, \eqref{eq:matched-linear-noise} and
\eqref{eq:matched-squared-noise} give
$$C_0\eta\sigma_1b_n+2C_0\eta^2\sigma_1b_n^2\le C_2\eta\sigma_r\kappa b_n$$ for a constant $C_2>0$ depending only on $K_A$ and $K_\epsilon$. Substitution into \eqref{eq:matched-expanded-recursion} yields
\begin{equation}x_{t+1}\le(1-c\eta\sigma_r)x_t+C_2\eta\sigma_r\kappa b_n,\quad c:=\frac{\mu_0}{2}.
  \label{eq:matched-affine-recursion}
\end{equation}
Denote $q:=1-c\eta\sigma_r<1$. Iterating \eqref{eq:matched-affine-recursion} gives
\begin{align}
x_t\le q^t x_0+ C_2\eta\sigma_r\kappa b_n  \sum_{j=0}^{t-1}q^j \le  q^t x_0+ \frac{C_2\eta\sigma_r\kappa b_n}{1-q} =
(1-c\eta\sigma_r)^t x_0+\frac{C_2}{c}\kappa b_n.
\label{eq:matched-recursion-solution}
\end{align}

By \eqref{eq:productbound} and
\eqref{eq:matched-dist-equality},
$ \|L_tR_t^\top-X^\star\|_2
  \le3\dist_t
  =3x_t$. Combining this inequality with
\eqref{eq:matched-recursion-solution} and $x_0\le\|Z_0Z_0^\top-M_+\|_2$ gives
\begin{align}
  \|L_tR_t^\top-X^\star\|_2
  &\le
  C\left[
    (1-c\eta\sigma_r)^t
    \|Z_0Z_0^\top-M_+\|_2
    +\kappa b_n
  \right] \notag\\
  &\le
  C_{\rm lin}
  \max\left\{
    (1-c_{\rm lin}\eta\sigma_r)^t
    \|Z_0Z_0^\top-M_+\|_2,\,
    \eps_{\rm stat}
  \right\},
  \label{eq:matched-sum-form}
\end{align}
where $\eps_{\rm stat}=\kappa\sigma\sqrt{\frac Dn}=\kappa b_n$, $C_{\rm lin}=2C=6\max\{1,C_2/c\}$ and $c_{\rm lin}=c$. 
If $\sigma=0$, then $b_n=\eps_{\rm stat}=0$, and the preceding estimate
reduces to
\[
  \|L_tR_t^\top-X^\star\|_2
  \le
  C_{\rm lin}
  (1-c_{\rm lin}\eta\sigma_r)^t
  \|Z_0Z_0^\top-M_+\|_2.
\]
Thus the noiseless iterates converge geometrically to exact recovery.
\end{proof}

\paragraph{Numerical validation}
\label{subsec:noisy-numerical}

We show the convergence result of gradient decent for matrix sensing under overparameterized and exactly parameterized factorizations.  The observations follow the Gaussian matrix sensing model $y_i=\langle A_i,V^\star\rangle+\xi_i$, $i=1,\ldots,n$, where the entries of each sensing matrix $A_i$ are independent standard Gaussian random variables and $\xi_i\sim\mathcal{N}(0,\sigma^2)$.  We set $d_1=d_2=50$, $r=3$, $n=1000$ and $\sigma=0.05$. We choose the nonzero singular values of the rank-$3$ target matrix as $(4,3,2)$. Consequently, the condition number is $\kappa=2$.  Gradient descent is run for $1000$ iterations with step size $\eta=0.1$. Both experiments use the rank-$k$ truncated SVD of the isotropic spectral estimator $\frac{1}{n}\sum_{i=1}^{n}y_iA_i$ for initialization. In the overparameterized experiment, the factor width is $k=4>r$. In the exactly parameterized experiment, we set $k=r=3$. We report the product error $\mathcal{E}_t:=\left\|L_tR_t^\top-X^\star\right\|_F$, together with two block error $D_t^{\mathrm{nz}}$ and $D_t^{\mathrm{null}}:=\|G_t\|_2$. The dashed horizontal line represents the statistical reference scale $4\varepsilon_{\mathrm{stat}}=4\kappa\sigma\sqrt{\frac{d_1+d_2}{n}}\approx 1.2649\times10^{-1}$.

\begin{figure}[t]
    \centering
    \includegraphics[width=\textwidth]
    {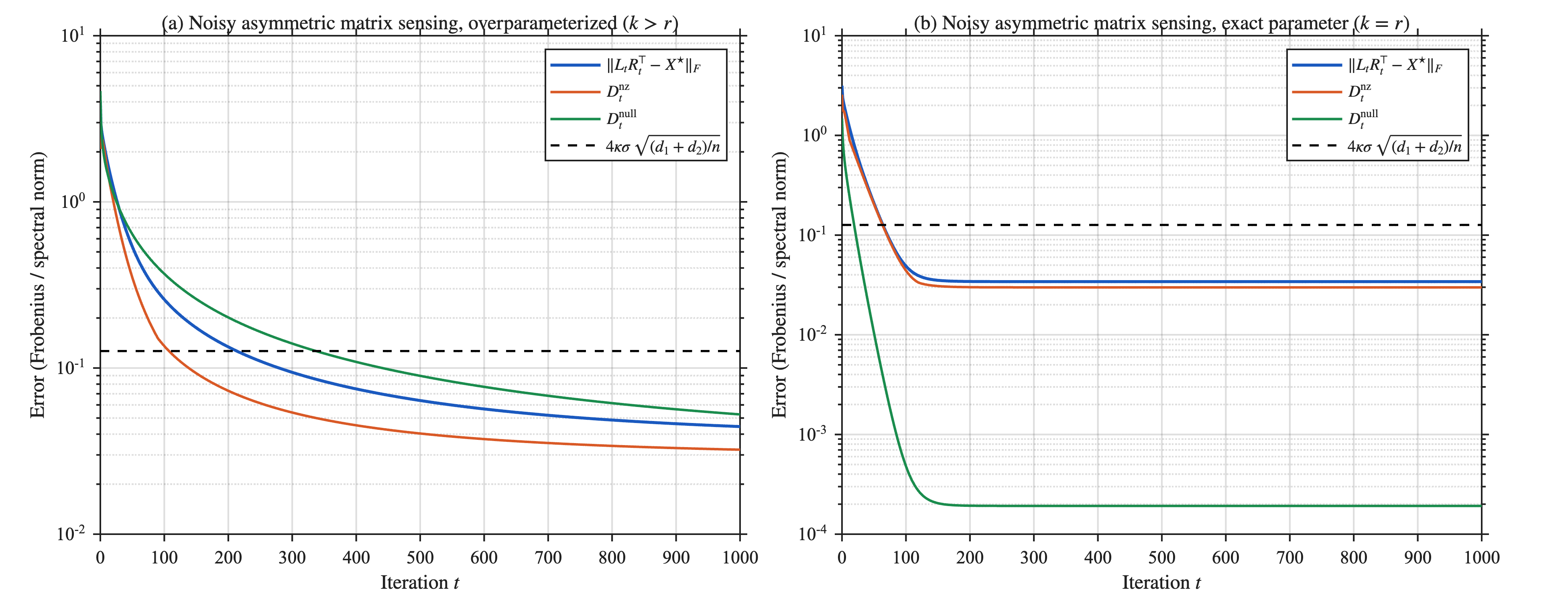}
    \caption{Convergence behavior under noisy asymmetric matrix sensing.
    \textnormal{(a)} Overparameterized factorization with $k=4>r=3$.
    \textnormal{(b)} Exactly parameterized factorization with $k=r=3$.
    The blue, orange, and green curves represent the product error $\|L_tR_t^\top-X^\star\|_F$, the block error $D_t^{\mathrm{nz}}$, and $D_t^{\mathrm{null}}:=\|G_t\|_2$, respectively.  The black dashed line denotes $4\varepsilon_{\mathrm{stat}}=4\kappa\sigma\sqrt{(d_1+d_2)/n}$.}
    \label{fig:noisy-two-case}
\end{figure}

As shown in Fig.~\ref{fig:noisy-two-case}(a), in the overparameterized case, $D_t^{\mathrm{nz}}$ converges linearly to the statistical error $4\varepsilon_{\mathrm{stat}}$, while $D_t^{\mathrm{null}}$ and $\mathcal{E}_t$ converge sublinearly to the statistical error. As shown in Fig.~\ref{fig:noisy-two-case}(b), in the exact-parameter case, $D_t^{\mathrm{nz}}$, $D_t^{\mathrm{null}}$, and $\mathcal{E}_t$ converge linearly to the statistical error. This verifies the conclusion of our theorem.

\section{Conclusion}
\label{sec:limitations}

We analyzed factorized gradient descent for overparameterized asymmetric matrix sensing with a balancing regularizer. Under sub-Gaussian measurements, noisy observations, and a local initialization, we established
\[
\|L_tR_t^\top-X^\star\|_2
\lesssim
\max\left\{
\frac{1}{\eta t+1/\sigma_r},
\,
\kappa\sigma\sqrt{\frac{d_1+d_2}{n}}
\right\}.
\]
Hence, the iterates converge at an \(O(1/t)\) rate to the statistical error scale. The spectral decomposition of the lifted dynamics identifies the source of this rate: the positive and negative spectral blocks have linear restoring terms, whereas the redundant null block contracts only quadratically and therefore governs the worst-case \(O(1/t)\) behavior. A population example shows that this rate is tight over the class of local initializations considered here.

\appendix

\section{Complete second-order population remainders}
\label{app:remainders}

This appendix expands every second-order remainder in \eqref{eq:remainders}.  From
\begin{equation}
 [\mathcal R_t\mathcal G_t\mathcal R_t]_{ij}
 =
 \sum_{a=1}^3\sum_{b=1}^3
 [\mathcal R_t]_{ia}
 [\mathcal G_t]_{ab}
 [\mathcal R_t]_{bj},
 \label{eq:indexpaths}
\end{equation}
the \((1,1)\) block gives
\begin{equation}
\begin{aligned}
 R_{A,t}
 =\frac{\eta^2}{4}\bigl(
 &A_t\Lambda A_t+A_t^3
 +A_tB_tB_t^\top+A_tC_tC_t^\top
 +B_tB_t^\top A_t+B_tE_tB_t^\top+B_tF_tC_t^\top\\
 &+C_tC_t^\top A_t+C_tF_t^\top B_t^\top+C_tG_tC_t^\top
 \bigr).
\end{aligned}
\label{eq:RAfull}
\end{equation}
The \((1,2)\) block gives
\begin{equation}
\begin{aligned}
 R_{B,t}
 =\frac{\eta^2}{4}\bigl(
 &A_t\Lambda B_t+A_t^2B_t+A_tB_tE_t+A_tB_t\Lambda
 +A_tC_tF_t^\top\\
 &+B_tB_t^\top B_t+B_tE_t^2+B_tE_t\Lambda
 +B_tF_tF_t^\top+C_tC_t^\top B_t\\
 &+C_tF_t^\top E_t+C_tF_t^\top\Lambda+C_tG_tF_t^\top
 \bigr).
\end{aligned}
\label{eq:RBfull}
\end{equation}
The \((1,3)\) block gives
\begin{equation}
\begin{aligned}
 R_{C,t}
 =\frac{\eta^2}{4}\bigl(
 &A_t\Lambda C_t+A_t^2C_t+A_tB_tF_t+A_tC_tG_t
 +B_tB_t^\top C_t+B_tE_tF_t+B_tF_tG_t\\
 &+C_tC_t^\top C_t+C_tF_t^\top F_t+C_tG_t^2
 \bigr).
\end{aligned}
\label{eq:RCfull}
\end{equation}
For the \((2,2)\) block,
\[
 (E_t+\Lambda)E_t(E_t+\Lambda)-\Lambda E_t\Lambda
 =
 E_t^3+E_t^2\Lambda+\Lambda E_t^2,
\]
thus
\begin{equation}
\begin{aligned}
 R_{E,t}
 =\frac{\eta^2}{4}\bigl(
 &B_t^\top\Lambda B_t+B_t^\top A_tB_t
 +B_t^\top B_tE_t+B_t^\top B_t\Lambda
 +B_t^\top C_tF_t^\top\\
 &+E_tB_t^\top B_t+\Lambda B_t^\top B_t
 +E_t^3+E_t^2\Lambda+\Lambda E_t^2
 +E_tF_tF_t^\top+\Lambda F_tF_t^\top\\
 &+F_tC_t^\top B_t+F_tF_t^\top E_t
 +F_tF_t^\top\Lambda+F_tG_tF_t^\top
 \bigr).
\end{aligned}
\label{eq:REfull}
\end{equation}
Finally, the \((2,3)\) block gives
\begin{equation}
\begin{aligned}
 R_{F,t}
 =\frac{\eta^2}{4}\bigl(
 &B_t^\top\Lambda C_t+B_t^\top A_tC_t
 +B_t^\top B_tF_t+B_t^\top C_tG_t
 +E_tB_t^\top C_t+\Lambda B_t^\top C_t\\
 &+E_t^2F_t+\Lambda E_tF_t
 +E_tF_tG_t+\Lambda F_tG_t
 +F_tC_t^\top C_t+F_tF_t^\top F_t+F_tG_t^2
 \bigr).
\end{aligned}
\label{eq:RFfull}
\end{equation}

Denote $M_t=\{A_t,B_t,C_t,E_t,F_t,G_t\}$.
Let $\rho=\dist_t$. Since $\|\Lambda\|_2=2\sigma_1$, for arbitrary blocks
\(H_{1,t},H_{2,t},H_{3,t}\) from $M_t$, we have 
\[
 \|H_{1,t}\Lambda H_{2,t}\|_2 ,\ \|\Lambda H_{1,t}H_{2,t}\|_2,\ 
 \|H_{1,t}H_{2,t}\Lambda\|_2
 \le 2\sigma_1\rho^2,
\]
and
\[
 \|H_{1,t}H_{2,t}H_{3,t}\|_2
 \le \rho^3
 \le c_{\rm loc}\sigma_r\rho^2
 \le c_{\rm loc}\sigma_1\rho^2.
\]
These four patterns cover, respectively, terms such as
\(A_t\Lambda A_t\), \(\Lambda E_tF_t\), \(B_tE_t\Lambda\) and
\(B_tF_tC_t^\top\). Inspection of
\eqref{eq:RAfull}--\eqref{eq:RFfull} shows that every displayed atom
is one of these types. Thus, there exist constant $C$, such that 
\[
 \max_{H\in\{A,B,C,E,F\}}\|R_{H,t}\|_2
 \le C\eta^2\sigma_1\rho^2.
\]
Here, $C$ can be chosen as 23/4.
Finally, \(\eta\sigma_1\le c_{\rm step}\) implies $\eta^2\sigma_1\rho^2 \le c_{\rm step}\eta\rho^2$, proving \eqref{eq:remainderbound}.

\bibliographystyle{plain}
{\small
\setlength{\parskip}{0pt}
\bibliography{references}

@inproceedings{bi2022,
  title={Local and global linear convergence of general low-rank matrix recovery problems},
  author={Bi, Yingjie and Zhang, Haixiang and Lavaei, Javad},
  booktitle={Proceedings of the AAAI Conference on Artificial Intelligence},
  volume={36},
  number={9},
  pages={10129--10137},
  year={2022}
}

@article{candesplan2011,
  author = {E. J. Cand\`es and Y. Plan},
  title = {Tight oracle inequalities for low-rank matrix recovery from a minimal number of noisy random measurements},
  year = {2011},
  journal = {IEEE Transactions on Information Theory},
  volume = {57},
  pages = {2342--2359},
  number = {4},
}

@misc{chenwainwright2015,
  author = {Y. Chen and M. J. Wainwright},
  title = {Fast low-rank estimation by projected gradient descent: General statistical and algorithmic guarantees},
  year = {2015},
  howpublished = {arXiv:1509.03025},
}

@article{chi2019,
  author = {Y. Chi and Y. M. Lu and Y. Chen},
  title = {Nonconvex optimization meets low-rank matrix factorization: An overview},
  year = {2019},
  journal = {IEEE Transactions on Signal Processing},
  volume = {67},
  pages = {5239--5269},
  number = {20},
}

@article{dirksen2016,
  author = {Sjoerd Dirksen},
  title = {Dimensionality Reduction with Subgaussian Matrices: A Unified Theory},
  year = {2016},
  journal = {Foundations of Computational Mathematics},
  volume = {16},
  pages = {1367--1396},
  number = {5},
}

@inproceedings{ge2017,
  author = {Rong Ge and Chi Jin and Yi Zheng},
  title = {No Spurious Local Minima in Nonconvex Low Rank Problems: A Unified Geometric Analysis},
  year = {2017},
  booktitle = {Proceedings of the 34th International Conference on Machine Learning},
  pages = {1233--1242},
}

@inproceedings{gunasekar2017,
  author = {S. Gunasekar and B. E. Woodworth and S. Bhojanapalli and B. Neyshabur and N. Srebro},
  title = {Implicit regularization in matrix factorization},
  year = {2017},
  booktitle = {Advances in Neural Information Processing Systems},
}

@inproceedings{jin2023,
  author = {J. Jin and Z. Li and K. Lyu and S. S. Du and J. D. Lee},
  title = {Understanding incremental learning of gradient descent: A fine-grained analysis of matrix sensing},
  year = {2023},
  booktitle = {Proceedings of the 40th International Conference on Machine Learning},
  pages = {15200--15238},
}

@inproceedings{li2018,
  author = {Y. Li and T. Ma and H. Zhang},
  title = {Algorithmic regularization in over-parameterized matrix sensing and neural networks with quadratic activations},
  year = {2018},
  booktitle = {Proceedings of the 31st Conference on Learning Theory},
  pages = {2--47},
}

@misc{liu2025apgd,
  author = {Zhiyu Liu and Zhi Han and Yandong Tang and Hai Zhang and Shaojie Tang and Yao Wang},
  title = {Efficient Over-parameterized Matrix Sensing from Noisy Measurements via Alternating Preconditioned Gradient Descent},
  year = {2025},
  howpublished = {arXiv:2502.00463v3},
}

@article{mali2021,
  author = {Cong Ma and Yuanxin Li and Yuejie Chi},
  title = {Beyond Procrustes: Balancing-Free Gradient Descent for Asymmetric Low-Rank Matrix Sensing},
  year = {2021},
  journal = {IEEE Transactions on Signal Processing},
  volume = {69},
  pages = {867--877},
}

@article{park2018,
  author = {Dohyung Park and Anastasios Kyrillidis and Constantine Caramanis and Sujay Sanghavi},
  title = {Finding Low-Rank Solutions via Non-Convex Matrix Factorization, Efficiently and Provably},
  year = {2018},
  journal = {SIAM Journal on Imaging Sciences},
  volume = {11},
  pages = {2165--2204},
  number = {4},
}

@book{rao1991,
  author = {M. M. Rao and Z. D. Ren},
  title = {Theory of Orlicz Spaces},
  year = {1991},
  publisher = {Marcel Dekker, New York},
}

@article{recht2010,
  author = {B. Recht and M. Fazel and P. A. Parrilo},
  title = {Guaranteed minimum-rank solutions of linear matrix equations via nuclear norm minimization},
  year = {2010},
  journal = {SIAM Review},
  volume = {52},
  pages = {471--501},
  number = {3},
}

@article{tong2021,
  author = {Tian Tong and Cong Ma and Yuejie Chi},
  title = {Accelerating Ill-Conditioned Low-Rank Matrix Estimation via Scaled Gradient Descent},
  year = {2021},
  journal = {Journal of Machine Learning Research},
}

@inproceedings{tu2016,
  author = {Stephen Tu and Ross Boczar and Max Simchowitz and Mahdi Soltanolkotabi and Benjamin Recht},
  title = {Low-rank Solutions of Linear Matrix Equations via Procrustes Flow},
  year = {2016},
  booktitle = {Proceedings of the 33rd International Conference on Machine Learning},
  pages = {964--973},
}

@book{vershynin2018,
  author = {R. Vershynin},
  title = {High-Dimensional Probability: An Introduction with Applications in Data Science},
  year = {2018},
  publisher = {Cambridge University Press},
}

@inproceedings{wen2012,
  author = {Zhiyuan Weng and Xin Wang},
  title = {Low-rank matrix completion for array signal processing},
  booktitle = {IEEE International Conference on Acoustics, Speech and Signal Processing (ICASSP)},
  pages = {2697--2700},
  year = {2012},
}

@article{wind2025,
    author = {Wind, Johan Sokrates},
    title = {Implicit regularization makes overparameterized asymmetric matrix sensing robust to perturbations},
    journal = {Philosophical Transactions of the Royal Society A: Mathematical, Physical and Engineering Sciences},
    volume = {384},
    number = {2324},
    pages = {20250120},
    year = {2026},
    month = {07},
    }

@inproceedings{xiong2024,
  author = {Nuoya Xiong and Lijun Ding and Simon S. Du},
  title = {How Over-Parameterization Slows Down Gradient Descent in Matrix Sensing: The Curses of Symmetry and Initialization},
  year = {2024},
  booktitle = {International Conference on Learning Representations},
}

@inproceedings{xu2023,
  author = {Xingyu Xu and Yandi Shen and Yuejie Chi and Cong Ma},
  title = {The Power of Preconditioning in Overparameterized Low-Rank Matrix Sensing},
  year = {2023},
  booktitle = {Proceedings of the 40th International Conference on Machine Learning},
  pages = {38611--38654},
}

@inproceedings{zhang2021,
  author = {Jialun Zhang and Salar Fattahi and Richard Y. Zhang},
  title = {Preconditioned Gradient Descent for Over-Parameterized Nonconvex Matrix Factorization},
  year = {2021},
  booktitle = {Advances in Neural Information Processing Systems},
}

@inproceedings{zhang2024noisy,
  author = {Gavin Zhang and Hong-Ming Chiu and Richard Y. Zhang},
  title = {Fast and Accurate Estimation of Low-Rank Matrices from Noisy Measurements via Preconditioned Non-Convex Gradient Descent},
  year = {2024},
  booktitle = {Proceedings of the 27th International Conference on Artificial Intelligence and Statistics},
  pages = {3772--3780},
}

@inproceedings{zhao2010,
  author = {Bo Zhao and Justin P. Haldar and Cornelius Brinegar and Zhi-Pei Liang},
  title = {Low rank matrix recovery for real-time cardiac MRI},
  booktitle = {IEEE International Symposium on Biomedical Imaging: From Nano to Macro},
  pages = {996--999},
  year = {2010},
}

@article{zhu_geometry2021,
  title={The global optimization geometry of low-rank matrix optimization},
  author={Zhu, Zhihui and Li, Qiuwei and Tang, Gongguo and Wakin, Michael B},
  journal={IEEE Transactions on Information Theory},
  volume={67},
  number={2},
  pages={1308--1331},
  year={2021},
  publisher={IEEE}
}

@article{zhuo2024,
  author = {Jiacheng Zhuo and Jeongyeol Kwon and Nhat Ho and Constantine Caramanis},
  title = {On the Computational and Statistical Complexity of Over-parameterized Matrix Sensing},
  year = {2024},
  journal = {Journal of Machine Learning Research},
  volume = {25},
  pages = {1--47},
  number = {169},
}

@article{gross2010quantum,
  author  = {David Gross and Yi-Kai Liu and Steven T. Flammia
             and Stephen Becker and Jens Eisert},
  title   = {Quantum State Tomography via Compressed Sensing},
  journal = {Physical Review Letters},
  volume  = {105},
  number  = {15},
  pages   = {150401},
  year    = {2010},
  doi     = {10.1103/PhysRevLett.105.150401}
}

@article{dokmanic2015edm,
  author  = {Ivan Dokmani{\'c} and Reza Parhizkar and Juri Ranieri
             and Martin Vetterli},
  title   = {Euclidean Distance Matrices: Essential Theory,
             Algorithms, and Applications},
  journal = {IEEE Signal Processing Magazine},
  volume  = {32},
  number  = {6},
  pages   = {12--30},
  year    = {2015},
  doi     = {10.1109/MSP.2015.2398954}
}

@inproceedings{arora2019deep,
  author    = {Sanjeev Arora and Nadav Cohen and Wei Hu and Yuping Luo},
  title     = {Implicit Regularization in Deep Matrix Factorization},
  booktitle = {Advances in Neural Information Processing Systems},
  volume    = {32},
  year      = {2019}
}

@inproceedings{li2021greedy,
  author    = {Zhiyuan Li and Yuping Luo and Kaifeng Lyu},
  title     = {Towards Resolving the Implicit Bias of Gradient Descent
               for Matrix Factorization: Greedy Low-Rank Learning},
  booktitle = {International Conference on Learning Representations},
  year      = {2021}
}

@inproceedings{zheng2015,
  author    = {Qinqing Zheng and John Lafferty},
  title     = {A Convergent Gradient Descent Algorithm for Rank
               Minimization and Semidefinite Programming from
               Random Linear Measurements},
  booktitle = {Advances in Neural Information Processing Systems},
  volume    = {28},
  year      = {2015},
}

@inproceedings{stoger2021,
  author    = {Dominik St{\"o}ger and Mahdi Soltanolkotabi},
  title     = {Small Random Initialization Is Akin to Spectral
               Learning: Optimization and Generalization Guarantees
               for Overparameterized Low-Rank Matrix Reconstruction},
  booktitle = {Advances in Neural Information Processing Systems},
  volume    = {34},
  pages     = {23831--23843},
  year      = {2021},
}

@article{zhu2018,
  author  = {Zhihui Zhu and Qiuwei Li and Gongguo Tang
             and Michael B. Wakin},
  title   = {Global Optimality in Low-Rank Matrix Optimization},
  journal = {IEEE Transactions on Signal Processing},
  volume  = {66},
  number  = {13},
  pages   = {3614--3628},
  year    = {2018},
  doi     = {10.1109/TSP.2018.2835403}
}

@article{li2019,
  author  = {Xingguo Li and Junwei Lu and Raman Arora and Jarvis Haupt
             and Han Liu and Zhaoran Wang and Tuo Zhao},
  title   = {Symmetry, Saddle Points, and Global Optimization
             Landscape of Nonconvex Matrix Factorization},
  journal = {IEEE Transactions on Information Theory},
  volume  = {65},
  number  = {6},
  pages   = {3489--3514},
  year    = {2019},
  doi     = {10.1109/TIT.2019.2898663}
}

@inproceedings{zhang2021a,
  author    = {Haixiang Zhang and Yingjie Bi and Javad Lavaei},
  title     = {General Low-Rank Matrix Optimization:
               Geometric Analysis and Sharper Bounds},
  booktitle = {Advances in Neural Information Processing Systems},
  volume    = {34},
  pages     = {27369--27380},
  year      = {2021}
}

@inproceedings{fang2021modeling,
  title={Modeling from features: a mean-field framework for over-parameterized deep neural networks},
  author={Fang, Cong and Lee, Jason and Yang, Pengkun and Zhang, Tong},
  booktitle={Conference on learning theory},
  pages={1887--1936},
  year={2021},
  organization={PMLR}
}

@article{jacot2018neural,
  title={Neural tangent kernel: Convergence and generalization in neural networks},
  author={Jacot, Arthur and Gabriel, Franck and Hongler, Cl{\'e}ment},
  journal={Advances in neural information processing systems},
  volume={31},
  year={2018}
}

@article{zou2020gradient,
  title={Gradient descent optimizes over-parameterized deep ReLU networks},
  author={Zou, Difan and Cao, Yuan and Zhou, Dongruo and Gu, Quanquan},
  journal={Machine learning},
  volume={109},
  number={3},
  pages={467--492},
  year={2020},
  publisher={Springer}
}
}

\end{document}